\DeclareMathOperator*{\argmin}{arg\,min}
\newtheorem{theorem}{Theorem}
\newtheorem{lemma}{Lemma}
\newtheorem{corollary}{Corollary}
\newtheorem{proposition}{Proposition}
\newtheorem{conjecture}{Conjecture}
\newtheorem{remark}{Remark}
\newtheorem{algorithm}{Algorithm}
\newenvironment{proof}{\begin{trivlist} \item[\hskip\labelsep{\it Proof.}]}{$\hfill\Box$\end{trivlist}}
\newcommand{\rd}{\,\mathrm{d}}
\newcommand{\bsx}{\boldsymbol{x}}
\newcommand{\bsy}{\boldsymbol{y}}
\newcommand{\bst}{\boldsymbol{t}}
\newcommand{\bsgamma}{\boldsymbol{\gamma}}
\newcommand{\bszero}{\boldsymbol{0}}
\newcommand{\RR}{\mathbb{R}}
\newcommand{\NN}{\mathbb{N}}
\newcommand{\cV}{\mathcal{V}}
\newcommand{\cP}{\mathcal{P}}
\newcommand{\cS}{\mathcal{S}}
\title{Uniformly distributed sequences generated by a greedy minimization of the $L_2$ discrepancy}
\author{Ralph Kritzinger 
}
\date{}
\begin{document}

\maketitle

\begin{abstract}
The aim of this paper is to develop greedy algorithms which generate uniformly distributed sequences in the $d$-dimensional unit cube $[0,1]^d$. The figures of merit are three different variants of $L_2$ discrepancy. Theoretical results along with numerical experiments suggest that the resulting sequences have excellent distribution properties. The approach we follow here is motivated by recent work of Steinerberger and Pausinger who consider similar greedy algorithms, where they minimize functionals that can be related to the star discrepancy or energy of point sets. In contrast to many greedy algorithms where the resulting elements of the sequence can only be given numerically, we will find that in the one-dimensional case our algorithms yield rational numbers which we can describe precisely. In particular, we will observe that any initial segment of a sequence in $[0,1)$ can be naturally extended to a uniformly distributed sequence where all subsequent elements are of the form $x_N=\frac{2n-1}{2N}$ for some $n\in\{1,\dots,N\}$. We will also investigate the dependence of the $L_2$ discrepancy of the resulting sequences on the dimension $d$.
\end{abstract}

\centerline{\begin{minipage}[hc]{130mm}{
{\em Keywords:} uniform distribution modulo 1, $L_2$ discrepancy, diaphony, van der Corput sequence, greedy algorithm\\
{\em MSC 2000:} 11K06, 11K31, 11K38}
\end{minipage}}
 
 \allowdisplaybreaks
 
\section{Introduction}

The theory of uniform distribution modulo 1 dates back to the seminal work of Weyl~\cite{Weyl} in 1916 and is concerned with the following elementary question: How does one construct a sequence $(x_n)_{n\geq 1}$ of real numbers in the unit interval $[0,1)$ such that for all numbers $N\geq 1$ and all intervals $J\subseteq [0,1]$ the difference
$$ |\#\{1\leq n\leq N: x_n\in J\}-N|J|| $$
is small or even bounded in $N$? For a fixed number $N$, it is not difficult to see that
\begin{equation*}  \sup_{J\subseteq [0,1]}|\#\{1\leq n\leq N: x_n\in J\}-N|J||\lesssim 1 \end{equation*}
($J$ is always an interval) if we choose an equispaced set of points $x_1\leq x_2\leq\dots\leq x_N$; i.e. $$|x_{n+1}-x_n|=\frac{1}{N} \text{\, for all \,} n=2,\dots,N.$$ However, as we are asked to build an infinite sequence, the number $N$ increases and we cannot use equispaced point sets. In fact it is known from a celebrated result by Schmidt~\cite{Schm72distrib} that any sequence $(x_n)_{n\geq 1}$ in the unit interval satisfies
\begin{equation}
    \label{Schmidt3}\sup_{J\subseteq [0,1]}|\#\{1\leq n\leq N: x_n\in J\}-N|J||\gtrsim  \log{N}
\end{equation}  
for infinitely many $N$. Schmidt's result implies that it is not possible to construct a sequence in the unit interval which is distributed too regularly for all $N$ simultaneously. Therefore, one speaks of irregularities of distribution. \\
It is known that Schmidt's lower bound is sharp. Two classical and well-known constructions which attain the $(\log{N})$-rate in~\eqref{Schmidt3} are:
\begin{itemize}
    \item The $(n\alpha)$-sequence or Kronecker sequence, where $x_n=n\alpha \pmod{1}$ for an irrational number $\alpha$ with a bounded continued fraction expansion.
    \item The van-der-Corput sequence as defined in Section~\ref{defisec}.
\end{itemize}
Analogue questions can be asked for sequences in the $d$-dimensional unit cube $[0,1]^d$ too. By a famous result of Roth in 1954 it is known that for any sequence $(\bsx_n)_{n\geq 0}$ in $[0,1]^d$ we have
\begin{equation} \label{Roth} \sup_{J\subseteq [0,1]^d}|\#\{1\leq n\leq N: \bsx_n\in J\}-N|J||\gtrsim (\log{N})^{\frac{d}{2}} \end{equation}
for infinitely many $N$. Roth's lower bound has since then been improved by Beck~\cite{beck} in dimension 2 and  Bilyk, Lacey and Vagharshakyan~\cite{bilyk} for $d\geq 2$, but the correct exponent of the $(\log{N})$-term is still unknown. The best known constructions of infinite sequences $(\bsx_n)_{n\geq 0}$ satisfy upper bounds of the form 
\begin{equation} \label{Schmidt} \sup_{J\subseteq [0,1]^d}|\#\{1\leq n\leq N: \bsx_n\in J\}-N|J||\lesssim (\log{N})^{d}, \end{equation}
where all these constructions are based on number-theoretic methods like digit expansions. Many researchers in the theory of uniform distribution believe that the exponent $d$ is best possible, while others conjecture that the exponent $\frac{d}{2}$ in Roth's lower bound should be replaced by $\frac{d+1}{2}$ to get a sharp lower bound. Both conjectures are in accordance with Schmidt's result for $d=1$. If we assume the exact exponent to be $\frac{d+1}{2}$ rather than $d$ (or at least some number smaller than $d$), then we need new constructions of extremely well distributed sequences in $[0,1]^d$. \\
Since the known algorithms based on digit expansions are not strong enough to yield such sequences, Steinerberger~\cite{Stein1,Stein2} recently proposed a novel method to construct sequences dynamically: Given an initial set $\{x_1,\dots,x_{N-1}\}$ of points in $[0,1)$, he constructs $x_{N}$ in a greedy manner
  \begin{equation} \label{greedy1} x_{N}=\argmin_{x\in [0,1]}\sum_{n=1}^{N-1} f(|x-x_n|) \end{equation}
  for suitable functions $f: [0,1]\to \mathbb{R}$.
  Hence, every subsequent element of the sequence is chosen dynamically such that a certain functional which depends on the already constructed points is minimized. Steinerberger proved that the resulting sequences are uniformly distributed and he performed numerical calculations which suggest that the sequences are very well distributed in the unit interval. He showed analogous results on higher dimensional sequences as well. Pausinger~\cite{Paus} recently proved that under certain conditions on the function $f$ in~\eqref{greedy1} the resulting sequences are generalized van-der-Corput sequences, which demonstrates that the dynamically defined sequences have the potential to be very well distributed indeed. \\
  We will adopt Steinerberger's and Pausinger's ideas and construct sequences in a greedy manner. We will present here one of our new results in dimension 1. We construct a sequence such that
   $$ x_1=\frac12; \qquad x_{N+1}:=\argmin_{x\in [0,1)} \left(-2\sum_{n=1}^N \min\{x_n,x\}+(N+1)x^2-x\right).  $$
  The motivation behind this construction is based on an $L_2$ approach and will be explained in Sections~\ref{sec2} and~\ref{sec3}. The algorithm generates a sequence starting with the points
    \begin{equation} \label{tollefolge}
        \frac12,\frac14,\frac56,\frac18,\frac{7}{10},\frac{5}{12},\frac{13}{14},\frac{1}{16},\frac{11}{18},\frac{7}{20},\dots
    \end{equation}
    We will prove that the elements of this sequence are of the form
    $$ x_N=\frac{2n-1}{2N} \text{\, for some \,} n=1,\dots,N\text{\, for all \,} N\geq 1.$$
    More generally, we find that independently of the points $\{x_1,\dots,x_k\}$ we start the algorithm with the subsequent elements of the sequence are all rational numbers of the above form. We will observe numerically that for this particular sequence the quantity
    $$\sup_{x\in [0,1]}|\#\{1\leq n\leq N: x_n\in [0,x)\}-Nx|$$
    is very small for all $N$ and seems to be even better behaved than for the van-der-Corput sequence. We view these results as an indication that the greedy approach is very effective. \\
    We will prove that our sequence~\eqref{tollefolge} satisfies
    $$ \left(\int_0^1 |\#\{1\leq n\leq N: x_n\in [0,x)\}-Nx|^2\rd x\right)^{\frac12} \lesssim \sqrt{N},$$
    which implies uniform distribution of the sequence in particular. We will further establish the same result for higher-dimensional sequences generated by new greedy algorithms. \\
    Summarizing, the sequence~\eqref{tollefolge} seems to have remarkable properties. It seems to improve upon classical constructions such as the van-der-Corput sequence with respect to uniform distribution and the established bound $\sqrt{N}$ seems far from the truth, which is probably $\sqrt{\log{N}}$. In order to prove the latter bound (if it is true), it is necessary to understand why the sequence~\eqref{tollefolge} works so well. Perhaps it is possible to find an explicit formula for the elements of the sequence or a simpler recursion, which enables a thorough analysis of the distribution properties of the sequence. \\
    The rest of this paper will be structured as follows. In Section~\ref{defisec} we will give the formal definitions of the quantities of interest and state known results which are relevant in the context of the paper. In Section~\ref{sec2}, we explain how our greedy algorithms work. We construct sequences in $[0,1)^d$ such that the inclusion of a new point $\bsx_{N+1}$ always minimizes the $L_2$ discrepancy of the initial segment $\{\bsx_1,\dots,\bsx_N,\bsx_{N+1}\}$, where $\bsx_1,\dots,\bsx_N$ are already constructed. 
We will show that our greedy algorithms yield sequences that are uniformly distributed modulo 1. We analyze the one-dimensional case in more detail in Section~\ref{sec3}, where we are able to prove precise results on the structure of the resulting sequences. We investigate the dependence of the $L_2$ discrepancy of our sequences on the dimension $d$ in Section~\ref{tract} and propose another greedy algorithm based on the weighted $L_2$ discrepancy. We close with a conclusion in Section~\ref{sec5}.
    
\section{Definitions and background} \label{defisec}

Now we give the necessary definitions and state results which are important in the context of the paper.

We consider an infinite sequence $\cS=\{\bsx_1,\bsx_2,\bsx_3,\dots\}$ of points in the $d$-dimensional unit cube. Let $B$ be a measurable subset of $[0,1]^d$. For a uniformly distributed sequence the number of points among the first $N$ elements of $\cS$ which lie in $B$ should be close to the Lebesgue measure $|B|$ of $B$, multiplied with the total number of points. This motivates the definition of the local discrepancy of the first $N$ elements of $\cS$ with respect to $B$ by
$$ \Delta_N(B,\cS):=\#\{n\in\{1,\dots,N\}: \bsx_n\in B\}-N|B|. $$
In the introduction we considered the supremum of the absolute value of the local discrepancy over all intervals $J\subseteq [0,1]^d$. In literature, this quantity is often called the (star) discrepancy of the sequence $\cS$, which is formally given by
$$  D_N(\cS):=\sup_{J \subset [0,1]^d} |\Delta_N(J,\cS)|. $$
It is well known that is suffices to only consider intervals $J$ which are anchored in the origin; i.e. intervals of the form $J=[\bszero,\bst)$,
where for $\bst=(t_1,\dots,t_d)\in [0,1]^d$ we set $[\bszero,\bst)=[0,t_1)\times [0,t_2)\times\dots\times [0,t_d)$. In this case, we write $D_N^*$ instead of $D_N$. \\
Clearly, it makes sense to measure the average behaviour of the local discrepancy by calculating its $L_2$ norm. The corresponding quantity is called (star) $L_2$ discrepancy and is formally defined as
$$ L_{2,N}(\cS)=L_{2,N}^{\mathrm{star}}(\cS):=\left(\int_{[0,1]^d} |\Delta_N([\bszero,\bst),\cS)|^2 \rd\bst\right)^{\frac{1}{2}}. $$
In the case of $L_2$ discrepancy, it turns out that (in constrast to the case of supremum norm) we obtain two non-equivalent measures of distribution of sequences if we choose arbitrary subintervals $J\subseteq [0,1]^d$ as test sets instead of intervals anchored in the origin. We therefore introduce the extreme $L_2$ discrepancy, where we use arbitrary subintervals $[\bsx,\bsy)$ of $[0,1]^d$ as test sets. Here for $\bsx=(x_1,\dots,x_d)\in [0,1]^d$ and $\bsy=(y_1,\dots,y_d)\in [0,1]^d$ with $\bsx\leq\bsy$, i.e. $x_i\leq y_i$ for all $i\in \{1,2,\dots,d\}$, we define $[\bsx,\bsy)=[x_1,y_1)\times\dots\times [x_d,y_d)$. The extreme $L_2$ discrepancy is then defined as
$$ L_{2,N}^{\mathrm{extr}}(\cS):=\left(\int_{[0,1]^d}\int_{[0,1]^d,\, \bsx\leq \bsy} |\Delta_N([\bsx,\bsy),\cS)|^2 \rd\bsx\rd\bsy\right)^{\frac{1}{2}}. $$
The periodic $L_2$ discrepancy uses periodic boxes as test sets, which can be introduced as follows: For $x,y\in [0,1]$ set
$$ I(x,y)=\begin{cases}
           [x,y) & \text{if $x\leq y$}, \\
           [0,y)\cup [x,1)& \text{if $x>y$,}
          \end{cases}$$
and for $\bsx,\bsy$ as above define $B(\bsx,\bsy)=I(x_1,y_1)\times\dots\times I(x_d,y_d)$.
We define the periodic $L_2$ discrepancy of $\cS$ as
$$  L_{2,N}^{\mathrm{per}}(\cS):=\left(\int_{[0,1]^d}\int_{[0,1]^d}|\Delta_N(B(\bsx,\bsy),\cS)|^2\rd \bsx\rd\bsy\right)^{\frac{1}{2}}.  $$
It is known that in dimension 1 we have the relation $L_{2,N}^{\mathrm{per}}(\cS)^2=2L_{2,N}^{\mathrm{extr}}(\cS)^2$ for every sequence $\cS\subset [0,1]$ (see~\cite[Theorem 7]{HKP20}). Further, the periodic $L_2$ discrepancy divided by $N$ is (up to a
multiplicative factor) exactly the diaphony. The diaphony is a well-known measure for
the irregularity of the distribution of point sets and was introduced by Zinterhof~\cite{zint}.

A sequence $\cS=\{\bsx_1,\bsx_2,\bsx_3,\dots\}\subset [0,1)^d$ is called uniformly distributed modulo 1 if and only if
$$ \lim_{N\to\infty}\frac{\#\{n\in\{1,\dots,N\}: \bsx_n\in [\bsx,\bsy)\}}{N}=|[\bsx,\bsy)| $$
for all intervals $[\bsx,\bsy)\subseteq [0,1]^d$. It is well-known that a sequence $\cS$ is uniformly distributed if and only if $$\lim_{N\to\infty}N^{-1} L_{2,N}^{\bullet}(\cS)=0 \text{\, for any \,} \bullet\in \{\mathrm{star},\mathrm{extr},\mathrm{per}\}.$$ For the star $L_2$ discrepancy see e.g.~\cite[Theorem 1.6, Theorem 1.8]{DT97}). For the extreme $L_2$ discrepancy we refer to~\cite{moro} and for the periodic $L_2$ discrepancy or diaphony consult~\cite{zint}. \\
The $L_2$ discrepancy of a sequence cannot be arbitrarily small. For any sequence $\cS$ in $[0,1]^d$ we have
$$L_{2,N}^{\bullet}(\cS)\gtrsim (\log{N})^{\frac{d}{2}} \text{\, for infinitely many \,} N,$$ where $\bullet\in \{\mathrm{star},\mathrm{extr},\mathrm{per}\}$ and where the implied constant depends only on $d$. We refer to~\cite{pro86} and~\cite[Theorem 3, Theorem 5]{kirk} for the star and periodic $L_2$ discrepancy. The claim for the extreme $L_2$ discrepancy can be found in the recent note~\cite[Theorem 1]{KP21}, from which the lower bounds on the star and periodic $L_2$ discrepancy follow as well, which both dominate the extreme $L_2$ discrepancy~\cite[Equ. (1) and Theorem 6]{HKP20}. 
For $\bullet\in\{\mathrm{star},\mathrm{extr}\}$ it is known that there exist sequences which match these bounds, e.g. higher order digital sequences (see~\cite{DP14neu}) or Halton sequences for $d\geq 2$ (see~\cite{levin}). \\
We survey the case $d=1$ in more detail. The van der Corput sequence~\cite{vdc1,vdc2} is a classical example of a so-called low-discrepancy sequence. It is defined as follows: If $n\in\NN_0$ has the dyadic expansion $n=\sum_{j=0}^{m}n_j2^j$ for some $m\in\NN_0$ and digits $n_j\in\{0,1\}$ for $j=0,\dots,m$, set $\varphi(n):=\sum_{j=0}^m n_j2^{-j-1}$. Then the van der Corput sequence (in base $2$) is defined as  $\cV:=(\varphi(n))_{n\in\NN_0}$. It is known (see~\cite{chafa,proat}) that
$$ \limsup_{N\to\infty}\frac{L_{2,N}(\cV)}{\log{N}}=\frac{1}{6\log{2}}, $$
i.e. $L_{2,N}(\cV)=\mathcal{O}(\log{N})$, which is not best possible in $N$. However, a simple modification of the van der Corput sequence matches the optimal $L_2$ discrepancy bound. For the symmetrized van der Corput sequence
$$ \widetilde{\cV}:=\{\varphi(0),1-\varphi(0),\varphi(1),1-\varphi(1),\varphi(2),1-\varphi(2),\dots\} $$
we have $L_{2,N}( \widetilde{\cV})\lesssim \sqrt{\log{N}}$ for all $N\in\NN$ (see~\cite{fau90,lp,pro1988a}). Since the normalized periodic $L_2$ discrepancy is up to multiplicative constants the diaphony, it holds $L_{2,N}^{\mathrm{per}}(\cV)=\sqrt{2}L_{2,N}^{\mathrm{extr}}(\cV)\lesssim \sqrt{\log{N}}$ for all $N\geq 1$ (see~\cite{chafa}). A proof of the optimal extreme $L_2$ discrepancy bound of the symmetrized van der Corput sequence is also possible via Haar functions; see~\cite[Remark 39]{KP21}. Hence, the extreme and periodic $L_2$ discrepancy do not require a symmetrization of the van der Corput sequence in order to achieve the best possible order in $N$. 
\\
As already mentioned in the introduction, the van-der-Corput sequence attains the best possible order of star discrepancy in $N$. More precisely, it is known (see~\cite{befa}) that
$$\limsup_{N \rightarrow \infty} \frac{D_N^*(\cV)}{\log N}=\frac{1}{3 \log 2}. $$

\section{Greedy algorithms and general upper bounds on the $L_2$ discrepancy} \label{sec2}

In this section we will explain our new greedy algorithms and establish the general $L_2$ discrepancy bound of order $\sqrt{N}$, which we announced in the introduction. \\
The idea is the following. We construct a sequence $\cS$ element by element such that the inclusion of the next element of $\cS$ always minimizes the $L_2$ discrepancy. In the following, for sets $X$, $Y$ (the latter totally ordered) and $M\subseteq X$ and a function $f: X\to Y$ we set $$\argmin_{x\in M} f(x):=\{x\in M: f(x)\leq f(y) \text{\, for all \,} y\in M\}.$$  By $x^*\in \argmin_{x\in M} f(x)$ we express that $x^*$ may be chosen as any number in the set $\argmin_{x\in M}f(x)$.

\begin{algorithm}
  For $\bullet\in\{\mathrm{star},\mathrm{extr},\mathrm{per}\}$ let $\cS_d^{\bullet}=\{\bsx_1,\bsx_2,...\}\subset [0,1)^d$ be generated as follows:
\begin{enumerate} \label{mainalgo}
    \item For some arbitrary integer $k\geq 1$ choose $\cP_k=\{\bsx_1,\dots,\bsx_k\}\subset [0,1)^d$ arbitrarily.
    \item For $N\geq k$ let $\{\bsx_1,\dots,\bsx_{N}\}$ already be given. Choose
         \begin{equation} \label{argmin}
             \bsx_{N+1}\in\argmin_{\bsy\in[0,1)^d}L_{2,N+1}^{\bullet}(\{\bsx_1,\dots,\bsx_{N},\bsy\}).
         \end{equation}
\end{enumerate} 
\end{algorithm}

Now we prove the following upper bound on the $L_2$ discrepancy.

\begin{theorem} \label{maintheo}
  For $\bullet\in\{\mathrm{star},\mathrm{extr},\mathrm{per}\}$ a sequence $\cS_d^{\bullet}$ generated by Algorithm~\ref{mainalgo} satisfies
    \begin{equation} \label{mainbound} L_{2,N}(\cS_d^*)^2\leq \max\{c_d^{\bullet},L_{2,k}^{\bullet}(\cP_k)^2\}\cdot(N-k+1), \end{equation}
    for all  $N\geq k$, where
    $$ c_d^{\mathrm{star}}=c_d^{\mathrm{per}}:=\frac{1}{2^d}-\frac{1}{3^d} \text{\,\, and \,\,} c_d^{\mathrm{extr}}:=\frac{1}{6^d}-\frac{1}{12^d}. $$
    In particular, the sequence $\cS_d^{\bullet}$ is uniformly distributed modulo 1.
\end{theorem}

\begin{proof}
 We only show the result for $\bullet=\mathrm{star}$, since the proof of the other two cases follows similar lines, respectively. \\
 Let $\cS=\{\bsx_1,\bsx_2,\dots\}$ be a sequence in $[0,1)^d$, where we write $\bsx_n=(x_{n,1},\dots,x_{n,d})$ for $n\in\NN$. Then we have
 \begin{align} \label{warn1}
  L_{2,N}(\cS)^2=\frac{N^2}{3^d}-\frac{N}{2^{d-1}}\sum_{n=1}^{N}\prod_{i=1}^d (1-x_{n,i}^2)+\sum_{n,m=1}^{N}\prod_{i=1}^d \left(1-\max\{x_{n,i},x_{m,i}\}\right).
\end{align}
This formula follows by simple integration from the definition of the star $L_2$ discrepancy and can also be found in~\cite{warn}. It is often referred to as Warnock's formula. Using~\eqref{warn1} for $ L_{2,N+1}(\cS)^2$, we can derive the recursive formula
\begin{align} \label{recursion}
   L_{2,N+1}(\cS)^2=& \nonumber L_{2,N}(\cS)^2+\frac{2N+1}{3^d}-\frac{1}{2^{d-1}}\sum_{n=1}^{N}\prod_{i=1}^d (1-x_{n,i}^2) -\frac{N+1}{2^{d-1}}\prod_{i=1}^d (1-x_{N+1,i}^2)\\ &+ 2\sum_{n=1}^{N}\prod_{i=1}^d \left(1-\max\{x_{n,i},x_{N+1,i}\}\right)+\prod_{i=1}^d (1-x_{N+1,i}),
\end{align}
which holds for all $N\geq 1$.  The first expression in the right-hand-side of~\eqref{warn1} for $ L_{2,N+1}(\cS)^2$ can be written as
   \begin{align*}
       \frac{(N+1)^2}{3^d}=\frac{N^2}{3^d}+\frac{2N+1}{3^d}.
   \end{align*}  
   Further we have
   \begin{align*}
       &-\frac{N+1}{2^{d-1}}\sum_{n=1}^{N+1}\prod_{i=1}^d (1-x_{n,i}^2) \\=& -\frac{N}{2^{d-1}}\sum_{n=1}^{N}\prod_{i=1}^d (1-x_{n,i}^2)-\frac{1}{2^{d-1}}\sum_{n=1}^{N}\prod_{i=1}^d (1-x_{n,i}^2)-\frac{N+1}{2^{d-1}}\prod_{i=1}^d (1-x_{N+1,i}^2)
   \end{align*}
   and 
   \begin{align*}
      & \sum_{n,m=1}^{N+1}\prod_{i=1}^d \left(1-\max\{x_{n,i},x_{m,i}\}\right) \\ =&
      \sum_{n,m=1}^{N}\prod_{i=1}^d \left(1-\max\{x_{n,i},x_{m,i}\}\right)+ 2\sum_{n=1}^{N}\prod_{i=1}^d \left(1-\max\{x_{n,i},x_{N+1,i}\}\right)+\prod_{i=1}^d (1-x_{N+1,i}).
   \end{align*}
   Inserting these expressions into Warnock's formula~\eqref{warn1} yields~\eqref{recursion}. \\
   Now we prove~\eqref{mainbound} for $\bullet=\mathrm{star}$. 
The assertion is trivially true for $N=k$. Assume it is true for some fixed $N\geq k$. For $\bsy=(y_1,\dots,y_d)\in [0,1]^d$
define \begin{align*}
   \mathcal{L}_N(\bsy):=& L_{2,N}(\cS_d^*)^2+\frac{2N+1}{3^d}-\frac{1}{2^{d-1}}\sum_{n=1}^{N}\prod_{i=1}^d (1-x_{n,i}^2)-\frac{N+1}{2^{d-1}}\sum_{n=1}^{N}\prod_{i=1}^d (1-y_i^2) \\
   &+ 2\sum_{n=1}^{N}\prod_{i=1}^d \left(1-\max\{x_{n,i},y_i\}\right)+\prod_{i=1}^d (1-y_i).
\end{align*}
By~\eqref{recursion} we get that $\mathcal{L}_N(\bsy)\geq 0$ for all $\bsy\in [0,1]^d$. By definition of the sequence $\cS_d^{\mathrm{star}}$ and the induction hypothesis we deduce
\begin{align*}
    L_{2,N+1}(\cS_d^{\mathrm{star}})^2=& \min_{\bsy\in [0,1]^d} \mathcal{L}_N(\bsy) \leq \int_{[0,1]^d} \mathcal{L}_N(\bsy) \rd \bsy\\ =&L_{2,N}(\cS_d^{\mathrm{star}})^2+\frac{1}{2^d}-\frac{1}{3^d}\leq \max\{c_d^{\mathrm{star}},L_{2,k}(\cP_k)^2\} \cdot(N-k+2).
\end{align*}
We used $\int_0^1 (1-y_i^2)\rd y_i=\frac23$, $\int_0^1 (1-y_i)\rd y_i=\frac12$ and $\int_0^1 \left(1-\max\{x_{n,i},y_i\}\right)\rd y_i=\frac12\left(1-x_{n,i}^2\right)$ for all $i\in \{1,\dots,d\}$ in the second to last step. The proof is complete.
\end{proof}

Warnock type formulas exist for the extreme and periodic $L_2$ discrepancy too (see e.g.~\cite{HKP20,HOe,moro,NW10}), which can be used to show Theorem~\ref{maintheo} for $\bullet\in\{\mathrm{extr},\mathrm{per}\}$. As a side result from the proof of Theorem~\ref{maintheo}, we obtain the following relations:
\begin{align*}
    \argmin_{\bsy\in[0,1)^d}L_{2,N+1}^{\bullet}(\{\bsx_1,\dots,\bsx_{N},\bsy\})=\argmin_{\bsy\in[0,1)^d} f_{N,d}^{\bullet},
\end{align*}
where for $\bsy=(y_1,\dots,y_d)\in [0,1)^d$ we define $f_{N,d}^{\bullet}: [0,1)^d \to\RR$ such that
\begin{align*}
    f_{N,d}^{\mathrm{star}}&:=-\frac{N+1}{2^{d-1}}\prod_{i=1}^d (1-y_i^2) + 2\sum_{n=1}^{N}\prod_{i=1}^d \left(1-\max\{x_{n,i},y_i\}\right)+\prod_{i=1}^d (1-y_i), \\
    f_{N,d}^{\mathrm{extr}}&:=\left(1-\frac{N+1}{2^{d-1}}\right)\prod_{i=1}^d y_i(1-y_i) + 2\sum_{n=1}^{N}\prod_{i=1}^d \left(\min\{x_{n,i},y_i\}-x_{n,i}y_i\right), \\
    f_{N,d}^{\mathrm{per}}&:=\sum_{n=1}^{N}\prod_{i=1}^d \left(\frac12-|x_{n,i}-y_i|+(x_{n,i}-y_i)^2\right).
\end{align*}

\section{Further results on the one-dimensional case} \label{sec3}
\subsection{Greedy minimization of the star $L_2$ discrepancy}
Let $\cP=\{y_1,\dots,y_N\}$ be a point set in $[0,1)$, where $y_1\leq y_2 \leq ... \leq y_N$. Then the $L_2$ discrepancy of $\cP$ is given by
\begin{equation}
    L_{2,N}(\cP)^2=N\sum_{n=1}^{N}\left(y_n-\frac{2n-1}{2N}\right)^2+\frac{1}{12}. \label{minimal}
\end{equation}  
This formula can be derived directly from~\eqref{warn1} for $d=1$ and can also be found in~\cite[Corollary 1.1]{nied}.
We immediately conclude that for a fixed $N$ the unique $N$-element point set in $[0,1)$ with minimal $L_2$ discrepancy is the centred regular grid; i.e. the point set
$$ \Gamma_{N}:=\left\{\frac{2n-1}{2N}: n=1,2,\dots,N\right\}. $$
Therefore, for a fixed number $N$ of points the best point distribution with respect to $L_2$ discrepancy is known. However, constructing an infinite sequence such that the segment of the first $N$
 elements does have low discrepancy for all $N\geq 1$ is a much more difficult problem. Therefore we utilize the greedy algorithm from the previous chapter. For $d=1$, Algorithm~\ref{mainalgo} can be written in a simplified form:
Let $\cS=(x_n)_{n\geq 1}$ be an infinite sequence in $[0,1)$. Then for every $N\geq 1$ we have $$ L_{2,N+1}(\cS)^2= L_{2,N}(\cS)^2+\sum_{n=1}^{N}x_n^2-2\sum_{n=1}^{N}\max\{x_n,x_{N+1}\}+(N+1)x_{N+1}^2-x_{N+1}+\frac{2N+1}{3}. $$
Clearly, $$ \argmin_{x_{N+1}\in [0,1)}L_{2,N}(\{x_1,\dots,x_{N},x_{N+1}\})^2=\argmin_{x\in [0,1)}f_N(x), $$
where $f_N: [0,1)\to \RR$ such that \begin{equation}
    \label{fndef}f_N(x):= -2\sum_{n=1}^{N}\max\{x_n,x\}+(N+1)x^2-x.
\end{equation} It is reasonable to choose $x_1$ such that the $L_2$ discrepancy of the one-element point set $\{x_1\}$ is minimal. This is the case for $x_1=\frac12$, as we see from equation~\eqref{minimal}, which leads to the following construction algorithm. 
\begin{algorithm} \label{algo1} We construct a sequence $\cS^*=(x_n)_{n\geq 1}$ in $[0,1)$ in the following way:
   \begin{enumerate}
	     \item Set $x_1=\frac12$. 
			 \item For $N\geq 1$: Assume that the elements $x_1,\dots,x_{N}$ are already constructed. Set $ x_{N+1}:=\min\argmin_{x\in [0,1)}f_N(x)$, where $f_N$ as defined in~\eqref{fndef}. 
	 \end{enumerate}
\end{algorithm}

\begin{figure}[ht] 
     \centering
     {\includegraphics[width=120mm]{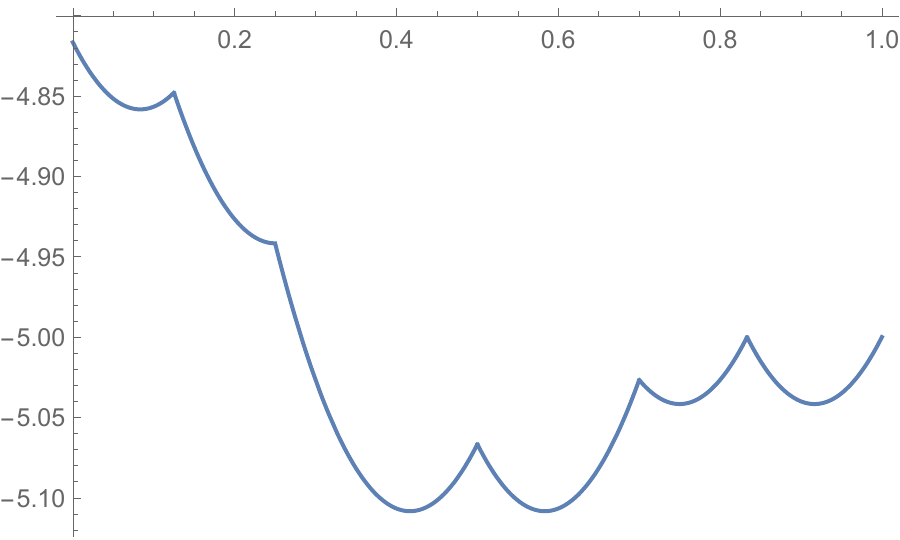}}
     \caption{The function $f_5$ takes global minima at $x=5/12$ and $x'=7/12$, where we pick the smaller argument $5/12$ in Algorithm~\ref{algo1}.}\label{emptyboxes}
\end{figure}

The fact that we choose the smallest element of $\argmin_{x\in [0,1)}f_N(x)$ is just a (random) choice to secure a unique output of the algorithm. We assume that the $L_2$ discrepancy of the resulting sequence is not affected significantly by doing so. We present the first 40 elements of the sequence $\cS^*$ generated by Algorithm~\ref{algo1}, which appears to be completely novel in the theory of uniform distribution modulo 1:
\begin{align*}
    \cS^*=&\left\{\frac12,\frac14,\frac56,\frac18,\frac{7}{10},\frac{5}{12},\frac{13}{14},\frac{1}{16},\frac{11}{18},\frac{7}{20},\frac{17}{22},\frac{5}{24},\frac{23}{26},\frac{13}{28},\frac{17}{30},\frac{1}{32},\frac{25}{34},\frac{11}{36},\frac{37}{38},\frac{7}{40},\right. \\ &\hspace{2mm}\left. \frac{9}{14},\frac{17}{44},\frac{37}{46},\frac{5}{48},\frac{27}{50},\frac{45}{52},\frac{5}{18},\frac{33}{56},\frac{9}{58},\frac{19}{20},\frac{27}{62},\frac{21}{64},\frac{15}{22},\frac{1}{68},\frac{53}{70},\frac{35}{72},\frac{67}{74},\frac{17}{76},\frac{49}{78},\frac{7}{80},...\right\}
\end{align*}
We observe that $x_N\in \Gamma_N$ for all $N\leq 40$. It is not difficult to show that this is the case for every $N$. To state the following two theorems, we introduce some notation. For a fixed $N\in\NN$ let $M_N:=\{x_1,\dots,x_N\}$ be the set of the first $N$ elements of the one-dimensional sequence $\cS^*$ generated by Algorithm~\ref{algo1}. Since these elements are pairwise distinct as we show in the following theorem, we may write $M_N=\{y_1,\dots,y_N\}$, where $y_1<y_2<\dots<y_N$. Hence, we sort the elements in $M_N$ and relabel them accordingly. Additionally, we set $y_0:=0$ and $y_{N+1}:=1$. For $k=1,\dots,N+1$ we define the functions $$\mathfrak{f}_k: \RR\to\RR; x\mapsto -(2k-1)x+(N+1)x^2-2\sum_{n=k}^{N}y_n.$$
  Then for all $k=1,\dots,N+1$ we clearly have \begin{equation} \label{restriction}
      f_N\bigg|_{x\in [y_{k-1},y_k]}=\mathfrak{f}_k\bigg|_{x\in [y_{k-1},y_k]}
  \end{equation} for the function $f_N$ as defined in~\eqref{fndef}. Finally we write $\Gamma_{N+1}=\{\gamma_1,\dots,\gamma_{N+1}\}$, where $\gamma_k:=\frac{2k-1}{2(N+1)}$ for $k=1,\dots,N+1$.

\begin{theorem} \label{mainl2}
Let $\cS^*$ be the sequence generated by Algorithm~\ref{algo1}. Then we have $x_N\in\Gamma_N$ for all $N\in \NN$. Further, $x_N$ is different from all previous elements of $\cS$.
\end{theorem}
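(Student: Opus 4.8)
\emph{Proof plan.} The plan is to prove, by induction on $N$, the combined statement that $x_1,\dots,x_N$ are pairwise distinct and $x_j\in\Gamma_j$ for every $j\le N$. The two assertions should be handled simultaneously, since distinctness of $x_1,\dots,x_N$ is needed already to make the labelling $M_N=\{y_1<\dots<y_N\}$ legitimate. The base case $N=1$ is immediate: $x_1=\frac12=\gamma_1$ and $\Gamma_1=\{\frac12\}$, while pairwise distinctness is vacuous. For the inductive step one assumes $x_1,\dots,x_N$ pairwise distinct (so that $M_N=\{y_1<\dots<y_N\}$ with $y_0:=0$, $y_{N+1}:=1$ is well defined) and $x_j\in\Gamma_j$ for $j\le N$, and must show that $x_{N+1}=\min\argmin_{x\in[0,1)}f_N(x)$ lies in $\Gamma_{N+1}$ and differs from each of $y_1,\dots,y_N$.

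The crux is a qualitative analysis of the shape of $f_N$ from~\eqref{fndef}. By~\eqref{restriction}, on each piece $[y_{k-1},y_k]$ the function $f_N$ coincides with the parabola $\mathfrak{f}_k$, which opens upwards (leading coefficient $N+1>0$) and has its vertex at $x=\frac{2k-1}{2(N+1)}=\gamma_k$. Now $f_N$ is continuous on $[0,1]$ and differentiable off the finite set $\{y_1,\dots,y_N\}$; comparing $\mathfrak{f}_k'$ and $\mathfrak{f}_{k+1}'$ at $y_k$ shows that the left derivative of $f_N$ there exceeds the right derivative by exactly $2$. Hence every corner of $f_N$ is ``concave'', so no $y_k$ can be a local minimum of $f_N$ (a continuous function can only have a local minimum at a point where the left derivative is $\le 0$ and the right derivative is $\ge 0$). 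I would also record the boundary slopes $f_N'(0^+)=\mathfrak{f}_1'(0)=-1<0$ and $f_N'(1^-)=\mathfrak{f}_{N+1}'(1)=1>0$; together with continuity of $f_N$ on the compact interval $[0,1]$ these guarantee that $f_N$ attains its minimum over $[0,1]$ at some point $x^*\in(0,1)$, and that this value is simultaneously the infimum over the half-open domain $[0,1)$, so that $\argmin_{x\in[0,1)}f_N$ is a nonempty subset of $(0,1)$.

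The conclusion then falls out. Any global minimiser $x^*\in(0,1)$ of $f_N$ is a local minimum, hence not one of the corners $y_k$, hence lies in the interior of some piece $(y_{k-1},y_k)$, where $f_N$ agrees with the smooth $\mathfrak{f}_k$; the first-order condition $\mathfrak{f}_k'(x^*)=0$ forces $x^*=\gamma_k$, so in particular $\gamma_k\in(y_{k-1},y_k)$. Since this reasoning applies to every element of $\argmin_{x\in[0,1)}f_N$, we get $\argmin_{x\in[0,1)}f_N\subseteq\Gamma_{N+1}$, and therefore $x_{N+1}\in\Gamma_{N+1}$. Finally, the chosen minimiser $x_{N+1}=\gamma_k$ satisfies $y_{k-1}<\gamma_k<y_k$ strictly; since moreover $y_j<y_{k-1}$ for $j<k-1$ and $y_j>y_k$ for $j>k$, it equals no $y_j$. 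Thus $x_{N+1}\notin M_N$, so $x_1,\dots,x_{N+1}$ are pairwise distinct, which closes the induction and proves both claims.

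The step I expect to be the real obstacle is the shape analysis: $f_N$ is not convex, so one cannot simply argue that its minimum is where a single derivative vanishes. The key point is that all of its non-smooth points are concave corners, which cannot host a minimum, so the minimiser is necessarily forced onto a parabola vertex $\gamma_k$, and automatically that vertex lies inside its own piece. The accompanying boundary bookkeeping — verifying that the infimum over the half-open $[0,1)$ is genuinely attained and lies in the open interval $(0,1)$ — is routine but needs to be spelled out via the signs of $f_N'$ at $0$ and $1$.
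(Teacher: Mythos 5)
Your proposal is correct and follows essentially the same route as the paper: induction together with the piecewise-parabola decomposition $f_N\big|_{[y_{k-1},y_k]}=\mathfrak{f}_k$, forcing any global minimiser onto a vertex $\gamma_k$ that lies strictly inside its piece, which gives both $x_{N+1}\in\Gamma_{N+1}$ and distinctness. The only (cosmetic) difference is how the breakpoints $y_k$ are excluded: you use the slope drop of exactly $2$ at each corner (a concave kink cannot be a local minimum) plus the boundary signs $f_N'(0^+)<0$, $f_N'(1^-)>0$, while the paper compares minima of adjacent pieces via $\gamma_{k-1}<\gamma_k$ --- the same underlying fact.
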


\begin{proof}
  The assertion is obviously true for $N=1$. Let $N\geq 1$. From the induction hypothesis we have that the elements $\{x_1,\dots,x_{N}\}=\{y_1,\dots,y_{N}\}$ are pairwise distinct and nonzero. 
   Define the intervals $I_k=[y_{k-1},y_k]$ for $k\in \{1,\dots, N+1\}$. By $I_k'$ we denote the interior of $I_k$ for all $k\in \{1,\dots, N+1\}$. There clearly exists an index $l\in \{1,\dots,N+1\}$ such that $\gamma_l\in I_l'$, since $\gamma_k \notin I_k'$ for all $k\in\{1,\dots, N\}$ induces $\gamma_{N+1}\in I_{N+1}'$.
 For every $k=1,\dots,N+1$ the function $\mathfrak{f}_k(x)$ is differentiable on $I_k'$ and $\mathfrak{f}_k'(x)=-(2k-1)+2(N+1)x$. Since $\mathfrak{f}_k\big|_{x\in I_k}$ is a quadratic function defined on a closed interval and its graph is part of an upwardly open parabola, it can have its only global minimum either at $y_{k-1}$, at $y_k$ or at $\gamma_k$ in case that $\gamma_k\in I_k'$, since $\mathfrak{f}_k'(\gamma_k)=0$. To be more precise, for $k\in\{1,\dots,N+1\}$ the minimum of $\mathfrak{f}_k\big|_{x\in I_k}$ is at
 $$ \begin{cases}
    y_{k-1}, & \text{if \,} \gamma_k\leq y_{k-1}, \\
    \gamma_k, & \text{if \,} \gamma_k\in (y_{k-1}, y_k), \\
    y_k & \text{if \,} \gamma_k\geq y_{k}.
  \end{cases}$$
  For $k=1$ we can rule out the first case, whereas for $k=N+1$ the third case cannot occur.
  As a conclusion, we state that the arguments of the global minima of $f_N$ are elements of the set $\Gamma_{N+1} \cup \{x_1,\dots,x_{N}\}$. Now we will rule out the set $\{x_1,\dots,x_{N}\}$ as possible candidates for arguments of global minima. Assume that $\mathfrak{f}_k\big|_{x\in I_k}$ (for some $k\in\{2,\dots,N+1\}$) takes its minimum at $y_{k-1}$. Then $\gamma_{k-1}<\gamma_k\leq y_{k-1}$. Hence, $\mathfrak{f}_{k-1}\big|_{x\in I_{k-1}}$ takes its minimum either at $\gamma_{k-1}$ or at $y_{k-2}$ and $y_{k-1}$ cannot be the argument of a global minimum of $f_N$. If $\mathfrak{f}_k\big|_{x\in I_k}$ for some $k\in\{1,\dots,N\}$ takes its minimum at $y_{k}$, then $\gamma_{k+1}>\gamma_k\geq y_{k}$. Hence, $\mathfrak{f}_{k+1}\big|_{x\in I_{k+1}}$ takes its minimum either at $\gamma_{k+1}$ or at $y_{k+1}$ and $y_k$ cannot be the argument of a global minimum of $f_N$.  Hence, $f_N$ takes its global minimum at an element $\gamma_l\in\Gamma_{N+1}$ such that $\gamma_l\in I_l'$. Therefore, $\gamma_l$ is also different from all previous points of the sequence.
\end{proof}

Note that Theorem~\ref{mainl2} allows us to replace the command $  x_{N+1}:=\min\argmin_{x\in[0,1)}f_N(x)$ in Algorithm~\ref{algo1} by  $x_{N+1}:=\min\argmin_{x\in\Gamma_{N+1}}f_N(x)$, which makes it a lot faster. 

\begin{remark} \rm
Let us consider a modified version of Algorithm~\ref{algo1}, where we start the algorithm with $k\geq 2$ points. Choose an arbitrary set $\cP_k=\{x_1,\dots,x_k\}\subset [0,1]$ and for $N\geq k$ choose $x_{N+1}$ as in Algorithm~\ref{algo1}. Then with the very same arguments as in the proof of Theorem~\ref{mainl2} we can prove that $x_N\in \Gamma_N$ for all $N\geq k+1$. It is easy to see that a situation where elements of $\cP_k$ are equal does not cause any problems. The crucial observation in the proof of Theorem~\ref{mainl2} is that the function $f_N \big|_{x\in (y_{l-1},y_l)}$ is differentiable for every $l=1,2,\dots,N+1$ such that $y_{l-1}\neq y_l$ and that its derivative does not depend on the already generated points.  Therefore, regardless of which curious set of initial elements one likes to choose as input for the greedy algorithm, the subsequent elements $x_N$ for $N>k$ are all rational numbers of the form $x_N=\frac{2l-1}{2N}$ for some $l\in\{1,\dots,N\}$. This fact is remarkable, since usually similar greedy algorithms tend to produce numbers which can only be given numerically and which depend heavily on the first $k$ elements we provide as input for the algorithm.
\end{remark}

We would like to learn more about the structure of the sequence $\cS^*$. We prove that two consecutive elements of the first $N$ elements of the sequence generated by Algorithm 1 can never be too close to each other (which demonstrates that there cannot occur clusters in initial segments of the sequence).

\begin{theorem} \label{structure}
Let $\cS^*$ be the sequence generated by Algorithm 1 and let the first $N\geq 1$ elements $\{y_1,\dots,y_{N}\}$ already be generated.  Then  we have $\min_{0\leq k\leq N} (y_{k+1}-y_k)\geq \frac{1}{2N}$.
\end{theorem}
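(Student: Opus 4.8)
The plan is to argue by induction on $N$. The base case $N=1$ is immediate: $M_1=\{1/2\}$, so both gaps equal $\frac{1}{2}=\frac{1}{2\cdot1}$. For the inductive step, assume the bound for $M_N=\{y_1,\dots,y_N\}$. By (the proof of) Theorem~\ref{mainl2} and the definition of Algorithm~\ref{algo1}, the new point is $x_{N+1}=\gamma_l$ for some $l\in\{1,\dots,N+1\}$ with $\gamma_l\in I_l'=(y_{l-1},y_l)$ \emph{and} $f_N(\gamma_l)=\min_{x\in[0,1)}f_N(x)$; it is this last, global, property that I would really lean on. Passing to $M_{N+1}=M_N\cup\{\gamma_l\}$ leaves every gap $(y_{k-1},y_k)$ with $k\neq l$ untouched (so $\geq\frac{1}{2N}>\frac{1}{2(N+1)}$ by the induction hypothesis) and splits $(y_{l-1},y_l)$ into $(y_{l-1},\gamma_l)$ and $(\gamma_l,y_l)$. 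Hence everything reduces to showing $\gamma_l-y_{l-1}\geq\frac{1}{2(N+1)}$ and $y_l-\gamma_l\geq\frac{1}{2(N+1)}$.

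For the second inequality (the first is symmetric, using $\gamma_{l-1}$ in place of $\gamma_{l+1}$ below; both are trivial when $l=1$ or $l=N+1$, where $\gamma_1-y_0=1-\gamma_{N+1}=\frac{1}{2(N+1)}$), I would argue by contradiction: suppose $1\le l\le N$ and $y_l-\gamma_l<\frac{1}{2(N+1)}$, and test $f_N$ at the neighbouring lattice point $\gamma_{l+1}=\gamma_l+\frac{1}{N+1}$. Since $\gamma_{l+1}>\gamma_l+\frac{1}{2(N+1)}>y_l$, this point lies in some interval $I_j$ with $j\ge l+1$. From~\eqref{fndef} and~\eqref{restriction} one reads off $\mathfrak{f}_k(x)-\mathfrak{f}_{k+1}(x)=2(x-y_k)$, hence by telescoping $\mathfrak{f}_{l+1}(x)-\mathfrak{f}_j(x)=2\sum_{n=l+1}^{j-1}(x-y_n)\ge 0$ at $x=\gamma_{l+1}$ (as $\gamma_{l+1}\ge y_{j-1}\ge y_n$ for $n\le j-1$), so $f_N(\gamma_{l+1})=\mathfrak{f}_j(\gamma_{l+1})\le\mathfrak{f}_{l+1}(\gamma_{l+1})$. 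Using that $\mathfrak{f}_l$ is an upward parabola with leading coefficient $N+1$ and vertex $\gamma_l$ (so $\mathfrak{f}_l(\gamma_{l+1})=\mathfrak{f}_l(\gamma_l)+\frac{1}{N+1}$), together with $\mathfrak{f}_{l+1}(x)=\mathfrak{f}_l(x)-2(x-y_l)$ and $f_N(\gamma_l)=\mathfrak{f}_l(\gamma_l)$, I expect
$$ f_N(\gamma_{l+1})\ \le\ \mathfrak{f}_l(\gamma_l)+\frac{1}{N+1}-2(\gamma_{l+1}-y_l)\ =\ f_N(\gamma_l)-\frac{1}{N+1}+2(y_l-\gamma_l)\ <\ f_N(\gamma_l), $$
which contradicts the minimality of $f_N$ at $\gamma_l$. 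This proves both inequalities and closes the induction.

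The identities for the $\mathfrak{f}_k$ are routine consequences of the definitions, so the real content is the exploitation of global optimality: the mere fact that $x_{N+1}\in\Gamma_{N+1}$ and lands in the interior of some $I_l$ does \emph{not} by itself prevent it from being arbitrarily close to $y_{l-1}$ or $y_l$, and only the comparison of $f_N(\gamma_l)$ with its value at the neighbouring lattice point $\gamma_{l\pm1}$ rules this out. I expect the one genuine nuisance to be that $\gamma_{l+1}$ may fail to lie in the adjacent interval $I_{l+1}$, which forces the detour through the telescoped identity relating $\mathfrak{f}_j$ and $\mathfrak{f}_{l+1}$ rather than a direct comparison; everything else is bookkeeping.
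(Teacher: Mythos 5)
Your argument is correct, and it proves exactly what Theorem~\ref{structure} asserts; the overall strategy (induction, then a contradiction obtained by comparing $f_N(\gamma_l)$ with the value of $f_N$ near the adjacent grid point $\gamma_{l\pm 1}$, using the quadratic pieces $\mathfrak{f}_k$) is the same as in the paper, but the execution of the key step differs in a way worth noting. The paper does not test at $\gamma_{l+1}$ unconditionally: it splits into the cases $y_{l+1}>\gamma_{l+1}$ (where $\gamma_{l+1}\in I_{l+1}$ and the shift identity~\eqref{k+1} applies directly) and $y_{l+1}\leq\gamma_{l+1}$ (where it instead evaluates $f_N$ at $y_{l+1}$ and needs the induction hypothesis $y_{l+1}-y_l\geq\frac{1}{2N}$ quantitatively, via $\frac{1}{N+1}-\frac{1}{N}<0$). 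Your telescoped inequality $f_N(\gamma_{l+1})=\mathfrak{f}_j(\gamma_{l+1})\leq\mathfrak{f}_{l+1}(\gamma_{l+1})$ removes this case distinction and uses the induction hypothesis only to carry over the old gaps, which is a genuine streamlining; I checked the identities $\mathfrak{f}_k-\mathfrak{f}_{k+1}=2(x-y_k)$, the vertex computation, and the symmetric version at $\gamma_{l-1}$, and they all hold. On the other hand, the paper's proof establishes the slightly stronger statement that the whole half-open interval $\left[\frac{l-1}{N+1},\frac{l}{N+1}\right)$ contains no point of $M_N$ (ruling out $y_{l-1}=\frac{l-1}{N+1}$ by using that the algorithm picks the \emph{smallest} minimizer), a fact exploited in the subsequent remark; your argument allows equality $y_{l-1}=\frac{l-1}{N+1}$ and hence yields only the gap bound, which is all the theorem needs. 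One cosmetic point: the parenthetical ``both are trivial when $l=1$ or $l=N+1$'' should say that in each boundary case only the corresponding boundary inequality is trivial (the other one is covered by your main or symmetric argument), but as written the ranges $1\leq l\leq N$ and $2\leq l\leq N+1$ do cover everything, so there is no gap.
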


\begin{proof}
  The assertion is clearly true for $N=1$. Assume that it is true for $M_N:=\{x_1,\dots,x_{N}\}=\{y_1,\dots,y_{N}\}$. 
  It is easy to show that
  \begin{align}
      \mathfrak{f}_{k-1}(x)=&\mathfrak{f}_k\left(x+\frac{1}{N+1}\right)+2\left(\frac{k-1}{N+1}-y_{k-1}\right) \text{\, for \,} k=2,\dots,N+1, \text{\, and} \label{k} \\ \label{k+1}
      \mathfrak{f}_{k+1}(x)=&\mathfrak{f}_k\left(x-\frac{1}{N+1}\right)+2\left(y_k-\frac{k}{N+1}\right) \text{\, for \,} k=1,\dots,N.
  \end{align}
  Now assume that $x_{N+1}=\gamma_l=\frac{2l-1}{2(N+1)}$ for some $l\in\{1,\dots,N+1\}$. Then $\gamma_l\in (y_{l-1},y_l)$ and $f_N(\gamma_l)=-(N+1)\gamma_l^2-2\sum_{n=l}^N y_n$. We will show that $\left[\frac{l-1}{N+1},\frac{l}{N+1}\right)\cap M_N=\emptyset$, which implies the assertion of the theorem. Assume that $y_l \in \left(\gamma_l,\frac{l}{N+1}\right)$ (which is not possible if $l=N+1$, therefore we assume $l\leq N$ in the following). Since $\min_{0\leq k\leq N} (y_{k+1}-y_k)\geq \frac{1}{2N}$, there cannot be more than one element of $M_N$ lying in $\left(\gamma_l,\frac{l}{N+1}\right)$. Now we distinguish the two cases $y_{l+1}\leq \gamma_{l+1}=\frac{2l+1}{2(N+1)}$ and $y_{l+1}>\gamma_{l+1}$. In the latter case, we have 
  $$ f_N(\gamma_{l+1})=f_N(\gamma_l)+2\left(y_l-\frac{l}{N+1}\right)<f_N(\gamma_l) $$
  by equations~\eqref{restriction} and~\eqref{k+1}. This contradicts the fact that $f_N$ takes a global minimum at $x=\gamma_l$. If $y_{l+1}\leq\gamma_{l+1}$, then we can estimate
  \begin{align*}
    f_N(y_{l+1})=&\mathfrak{f}_{l+1}(y_{l+1})
            =-(2l+1)y_{l+1}+(N+1)y_{l+1}^2-2\sum_{n=l+1}^N y_n \\
            =&-(2l-1)y_{l+1}+(N+1)y_{l+1}^2-2(y_{l+1}-y_l)-2\sum_{n=l}^N y_n \\
            =& (N+1)\left(y_{l+1}-\gamma_l\right)^2-(N+1)\gamma_l^2-2(y_{l+1}-y_l)-2\sum_{n=l}^N y_n \\
            \leq& (N+1) \left(\frac{1}{N+1}\right)^2-\frac{1}{N}+f_N(\gamma_l)<f_N(\gamma_l),
  \end{align*}
  and again we get a contradiction. Therefore, $y_l \notin \left(\gamma_l,\frac{l}{N+1}\right)$. Using~\eqref{k} instead of~\eqref{k+1}, we conclude in the same fashion that also $y_{l-1} \notin \left(\frac{l-1}{N+1},\gamma_l\right)$. In remains to show that $y_{l-1}\neq \frac{l-1}{N+1}$ if $l\geq 2$. If we assume the opposite, i.e. $y_{l-1}= \frac{l-1}{N+1}$, then $y_{l-2}\leq \gamma_{l-1}$. Hence $f_N(\gamma_{l-1})=f_N(\gamma_l)$, and thus $\gamma_l$ is not the minimal argument of a global minimum of $f_N$ and we get another contradiction. The proof is complete.
\end{proof}

\begin{remark} \rm
We state further structural properties of the sequence $\cS^*$, which are only partially true.
\begin{enumerate}
    \item It follows from the proof of Theorem~\ref{structure} that $x_{N+1}$ occupies an interval of the form $\left[\frac{l-1}{N+1},\frac{l}{N+1}\right)$ for some $l=1,\dots,N+1$, where no previous element of $\cS^*$ already lies in. (Note that the same statement is true for some open interval of the form $\left(\frac{l-1}{N+1},\frac{l}{N+1}\right)$ if we do not necessarily pick the smallest argument of a global minimum of $f_N$. Therefore Theorem~\ref{structure} is also true in those cases.) Clearly, there is always at least one such empty interval $\left[\frac{l-1}{N+1},\frac{l}{N+1}\right)$, and for $1\leq N \leq 11$ it turns out that there is only one. However, this is not true in general, as for $N=12$ there already exist 2 intervals of the form $\left[\frac{l-1}{N+1},\frac{l}{N+1}\right)$ that do not contain any elements of $M_{12}$, namely for $l=9$ and $l=12$, whereas the interval $\left[\frac{10}{13},\frac{11}{13}\right)$ contains the two points $x_3=\frac56$ and $x_{11}=\frac{17}{22}$. Note that $x_{13}=\frac{23}{26}$ occupies the empty interval for the larger value $l=12$, since $-12.0302...=f_{12}(23/26)<f_{12}(17/26)=-12.0269...;$ hence the decision between these two points is very close and it appears difficult to determine in advance which empty interval will be occupied by $x_{N+1}$.
    \item Given a set of $N$ points $\cP=\{x_1,\dots,x_N\}$ in $[0,1)$ and a number $x\in [0,1)$. We define
$$ d(x,\cP):=\min_{k=1,2,\dots,N}|x-x_k|; $$
i.e. the distance of $x$ to its closest element of $\cP$. One might wonder whether the element $x_{N+1}$ of $\cS^*$ is always chosen as the minimal number $x\in \Gamma_{N+1}$ such that $d(x,M_N)=\max_{\gamma\in\Gamma_{N+1}} d(\gamma,M_N)$. Indeed, this seems to be the case for many $N$. Numerical calculations show that the assertion is true for all $N\in \{1,\dots,24\}\setminus\{14,15,16\}$. However, there are several exceptions from this rule and it cannot be used for an alternative construction algorithm of $\cS^*$. 
\item The initial segment of $\cS^*$ shows properties which are not true in general. For example, $x_{2^r}=\frac{1}{2^{r+1}}$ is true for $r=0,1,2,3,4$, $x_{3\cdot 2^r}=\frac{5}{3\cdot 2^{r+1}}$ holds for $r=0,1,2,3,4$ and $x_{5\cdot 2^r}=\frac{7}{5\cdot 2^{r+1}}$ for $r=0,1,2,3$, while $x_{2n}<\frac12$ and $x_{2n+1}\geq \frac12$ holds for $n\leq 12$ and more, but all these relations fail in general as verified by computing the first 1100 elements of $\cS^*$. As a consequence, it seems hard to come up with an explicit formula for the elements of $\cS^*$.

\end{enumerate}
\end{remark}

Theorem 1 yields the bound $L_{2,N}(\cS^*)\leq \sqrt{N/6}$ for all $N\geq 1$, which is a very bad upper bound compared to the best possible bound. Numerical experiments suggest that the $L_2$ discepancy of the sequence $\cS^*$ is much smaller than this bound. Figure 2 compares the $L_2$ discrepancy of the first $N$ elements of $\cS^*$ to the $L_2$ discrepancy of the first $N$ elements of the symmetrized van der Corput sequence up to $N=1100$, which indicates that $\cS^*$ has a lower $L_2$ discrepancy than $\widetilde{\cV}$ for most $N\geq 1$.

\begin{figure}[ht] 
     \centering
     {\includegraphics[width=150mm]{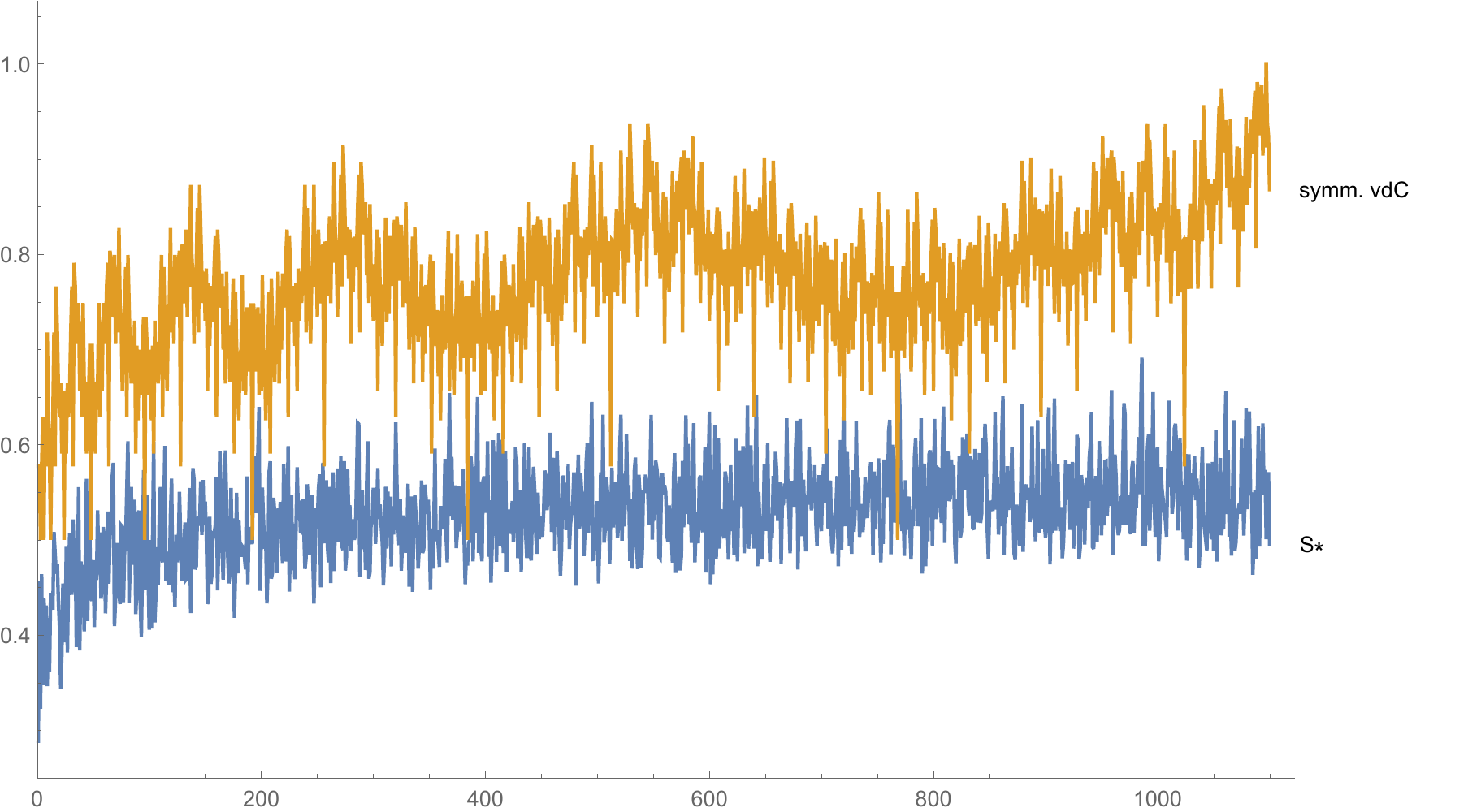}}
     \caption{For most $N$ we have $L_{2,N}(\cS^*)<L_{2,N}(\widetilde{\cV}$).}\label{compare}
\end{figure}

\begin{conjecture} \rm \label{conj1}
Let $\cS^*$ be the sequence generated by Algorithm 1. We conjecture that
  $$ \limsup_{N\to\infty}\frac{L_{2,N}(\cS^*)}{\sqrt{\log{N}}} <\limsup_{N\to\infty}\frac{L_{2,N}(\widetilde{\cV})}{\sqrt{\log{N}}}\leq 0.319553...$$
(see~\cite{fau90} for the second inequality). For a further hint towards this conjecture on the optimal $L_2$ discrepancy rate of $\cS^*$ we refer to the forthcoming Remark~\ref{vdcrem}. Every improvement of the bad discrepancy bound along with an explicit formula for the sequence $\cS^*$ would be desirable.
\end{conjecture}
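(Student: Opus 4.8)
The assertion splits into two tasks. First one must show that $\limsup_{N\to\infty}L_{2,N}(\cS^*)/\sqrt{\log N}$ is \emph{finite}, that is, that $\cS^*$ attains the optimal order $L_{2,N}(\cS^*)=O(\sqrt{\log N})$; this is the substantial part, since the only bound currently available, $L_{2,N}(\cS^*)\leq\sqrt{N/6}$ from Theorem~\ref{unif}, is far too weak. Second, one must show that this finite limsup is strictly smaller than $\limsup_{N\to\infty}L_{2,N}(\widetilde{\cV})/\sqrt{\log N}$, whose value is $\leq 0.319553\ldots$ by \cite{fau90}.

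For the first task I would work with the one-dimensional identity \eqref{minimal}. Writing $M_N=\{y_1<\dots<y_N\}$ for the sorted initial segment of $\cS^*$, this identity expresses $L_{2,N}(\cS^*)^2$, up to the additive constant $\tfrac1{12}$, through the squared deviations $y_n-\tfrac{2n-1}{2N}$ of the sorted points from the centred grid $\Gamma_N$. The goal thus becomes to control the deviation vector $D_N:=\big(y_n-\tfrac{2n-1}{2N}\big)_n$ in the relevant $\ell_2$-sense, showing it is of size $O(\sqrt{\log N})$. The plan is to set up a recursion for this quantity under $N\mapsto N+1$. By Theorem~\ref{mainl2} the new point $x_{N+1}$ lies on $\Gamma_{N+1}$, and by the proof of Theorem~\ref{structure} it fills an interval $[\tfrac{l-1}{N+1},\tfrac{l}{N+1})$ containing no earlier point, while passing from $\Gamma_N$ to $\Gamma_{N+1}$ rescales all the old grid anchors by the factor $\tfrac{N}{N+1}$. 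One then obtains a schematic recursion ``new deviation energy $=$ old deviation energy $+$ rescaling term $+$ insertion term'', in which the insertion term is controlled via Theorem~\ref{structure} (the filled interval has length $\tfrac1{N+1}$ and its neighbours are at distance $\geq\tfrac1{2N}$) and the rescaling term is estimated by Cauchy--Schwarz; summing over $N$ should, provided the terms really are $O(1/N)$ on average, give the bound $O(\log N)$. An alternative and probably more robust route is the Haar-coefficient expansion of the local discrepancy used for $\widetilde{\cV}$ in \cite[Remark 39]{KP21}: expand $L_{2,N}(\cS^*)^2$ as a weighted sum of squared Haar coefficients $\mu_{j,m}$ of $\Delta_N([0,t),\cS^*)$, show that the structural facts above force $|\mu_{j,m}|=O(1)$ and that only $O(\log N)$ dyadic levels carry appreciable mass, and add up.

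I expect the main obstacle to be exactly the step where one controls the accumulation. We have no explicit description of $\cS^*$ --- the remarks after Theorem~\ref{structure} list several tempting patterns, all of which eventually fail --- and, worse, we cannot predict \emph{which} of the possibly several admissible empty intervals $x_{N+1}$ occupies, the competing choices being numerically almost tied (the case $N=12$). A successful argument must therefore be insensitive to this ambiguity and deduce $O(\log N)$-growth from the qualitative invariants alone: at each step a grid point fills an empty dyadic-type interval, consecutive points stay $\geq\tfrac1{2N}$ apart, and $x_N\in\Gamma_N$. Whether these invariants by themselves force the optimal order rather than, say, $(\log N)^2$ is essentially the open improvement the paper flags; a natural intermediate milestone would be the weaker bound $L_{2,N}(\cS^*)=O(\log N)$, matching the plain van der Corput sequence $\cV$.

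For the second task one needs an effective form of the first: carry the constants through the above analysis to obtain an explicit $C$ with $\limsup_N L_{2,N}(\cS^*)/\sqrt{\log N}\leq C$, and compare $C$ with a lower bound for --- ideally the exact value of --- $\limsup_N L_{2,N}(\widetilde{\cV})/\sqrt{\log N}$; note that the upper estimate $0.319553\ldots$ from \cite{fau90} alone does not suffice to conclude a strict inequality. A purely comparative argument --- at each step $\cS^*$ performs the locally $L_2$-optimal insertion whereas $\widetilde{\cV}$ is constrained to a rigid dyadic pattern, so $\cS^*$ cannot be asymptotically worse and should be strictly better along infinitely many scales, consistent with Figure~\ref{compare} and with Remark~\ref{vdcrem} --- would in principle avoid computing $C$, but it is delicate because the two limsups are realised along different subsequences of $N$; I would use it to guide rather than replace the quantitative estimate.
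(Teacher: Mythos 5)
The statement you are addressing is Conjecture~\ref{conj1}: the paper offers no proof of it, only numerical evidence (the comparison with $\widetilde{\cV}$ up to $N=1100$) and the heuristic analogy of Remark~\ref{vdcrem}, namely that the greedy minimization of the extreme/periodic $L_2$ discrepancy provably produces the van der Corput sequence and hence attains the optimal order $\sqrt{\log N}$. So there is no proof in the paper to match your argument against, and your proposal does not close the gap either: it is a research plan whose two decisive steps remain unestablished. Concretely, (i) the bound $L_{2,N}(\cS^*)=O(\sqrt{\log N})$ is never derived --- your sketched recursion for the deviation energy in~\eqref{minimal} hinges on the unverified assumption that the per-step increments are ``$O(1/N)$ on average'', and the structural invariants you invoke (membership in $\Gamma_N$ from Theorem~\ref{mainl2}, the gap bound of Theorem~\ref{structure}, occupation of an empty interval of length $\tfrac{1}{N+1}$) are not shown to imply any bound better than the trivial $O(\sqrt{N})$ of Theorem~\ref{unif}; you acknowledge this yourself. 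Note also that your rescaling step is not quite as clean as stated: inserting $x_{N+1}$ shifts the sorted indices of all points above the insertion, so the old deviations are not merely rescaled by $\tfrac{N}{N+1}$, and this reindexing is exactly where cancellation or accumulation would have to be controlled. (ii) The strict inequality against $\widetilde{\cV}$ is likewise untouched: as you correctly observe, the cited value $0.319553\ldots$ from~\cite{fau90} is only an upper bound in the paper's formulation, so even a fully explicit constant $C$ for $\cS^*$ would not yield the strict comparison without a matching lower bound (or exact value) for the limsup of $\widetilde{\cV}$; and the ``locally optimal insertion'' heuristic cannot substitute for this, since greedy steps need not be globally optimal and the two limsups may be attained along different subsequences, as you note.

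In short, your proposal is a reasonable and honest outline of how one might attack the conjecture --- the reduction via~\eqref{minimal} to controlling deviations from the centred grid, or alternatively a Haar-coefficient expansion in the spirit of~\cite{KP21}, together with the structural input of Theorems~\ref{mainl2} and~\ref{structure} --- and it correctly identifies the obstacles (no explicit formula for $\cS^*$, the unpredictability of which empty interval is filled, the insufficiency of the upper bound on the $\widetilde{\cV}$ constant). But it does not constitute a proof, and it should not be presented as one; the statement remains, both in the paper and after your attempt, an open conjecture.
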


We can also compare the star discrepancy of the first $N$ elements of $\cS^*$ to the star discrepancy of the first $N$ elements of the van der Corput sequence up to $N=1100$. In Figure~\ref{comparestar} we observe that the star discrepancy of $\cS^*$ is smaller for most $N$ and less fluctuating.

\begin{figure}[ht] 
     \centering
     {\includegraphics[width=130mm]{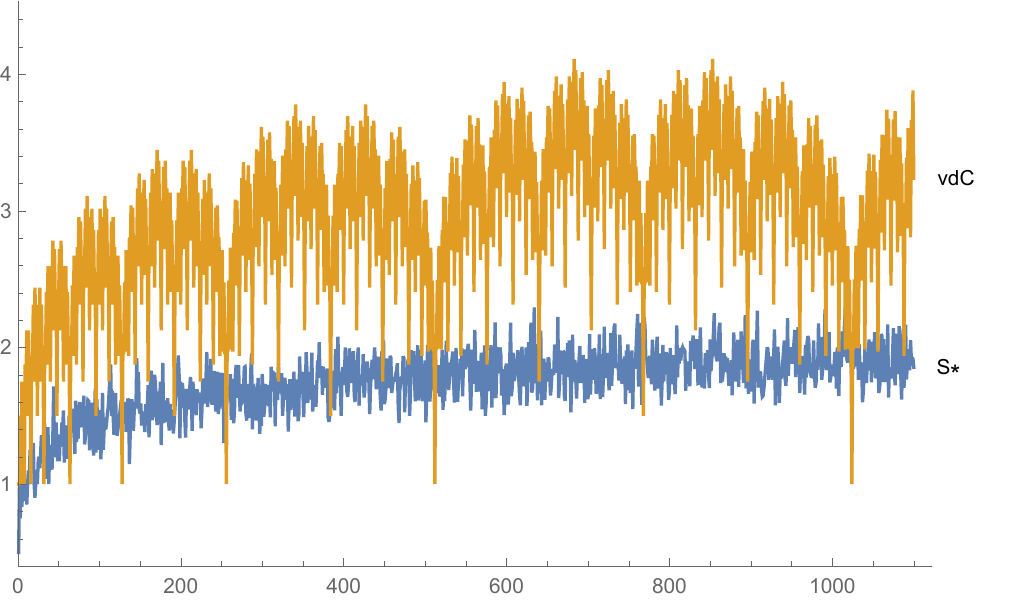}}
     \caption{The star discrepancy of the sequence $\cS^*$ is very small for $N\leq 1100$ which indicates that it is a low-discrepancy sequence.}\label{comparestar}
\end{figure}
\begin{conjecture} \rm \label{conj2}
We conjecture that the sequence $\cS^*$ is a low-discrepancy sequence; i.e. $D_N^*(\cS^*)=\mathcal{O}(\log{N})$. We even conjecture that
  $$ \limsup_{N\to\infty}\frac{D_N^*(\cS^*)}{\log{N}} <\limsup_{N\to\infty}\frac{D_N^*(\cV)}{\log{N}}=\frac{1}{3\log{2}} = 0.480898...$$
\end{conjecture}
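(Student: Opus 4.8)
The natural first step is to recast the conjecture in terms of the sorted initial segment $M_N=\{y_1<\dots<y_N\}$ of $\cS^*$. Writing $\gamma_n^{(N)}=\frac{2n-1}{2N}$, the classical one-dimensional star-discrepancy formula for a point set gives
$$D_N^*(\cS^*)=\frac12+N\max_{1\le n\le N}\bigl|y_n-\gamma_n^{(N)}\bigr|=\frac12+\max_{1\le n\le N}\bigl|Ny_n-(n-\tfrac12)\bigr|,$$
so the conjecture is equivalent to $E_N:=\max_{1\le n\le N}|Ny_n-(n-\tfrac12)|=\mathcal{O}(\log N)$: every point of the segment must stay within $\mathcal{O}(\log N)/N$ of its position in the centred grid $\Gamma_N$. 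One would combine this with the explicit description of a single greedy step from Section~\ref{sec3}: by Theorems~\ref{mainl2} and~\ref{structure}, $x_{N+1}=\gamma_l^{(N+1)}$ for the index $l$ maximising, over the currently empty $\tfrac1{N+1}$-slots (those $k$ with $\gamma_k^{(N+1)}\in(y_{k-1},y_k)$), the functional
$$R_k:=\frac{(2k-1)^2}{4(N+1)}-2\sum_{n=1}^{k-1}y_n=2\sum_{n=1}^{k-1}\bigl(\gamma_n^{(N+1)}-y_n\bigr)+\frac{4k-3}{4(N+1)}$$
(for which $R_{k+1}-R_k=2(\tfrac{k}{N+1}-y_k)$), so that $x_{N+1}$ is dropped into the empty slot at which the cumulative leftward displacement of $y_1,\dots,y_{k-1}$ against the refined grid is largest. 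Finally, by~\eqref{minimal} (up to normalisation) $L_{2,N}(\cS^*)^2$ is a constant plus a positive multiple of $\sum_{n=1}^{N}(y_n-\gamma_n^{(N)})^2$, so the greedy rule controls an $\ell^2$-quantity whereas the target $E_N$ is an $\ell^\infty$-quantity — which is the heart of the difficulty.

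With these ingredients in hand, the aim is an induction showing that the greedy rule never lets $E_N$ exceed $\mathcal{O}(\log N)$, powered by a self-correction mechanism. Suppose $|Ny_{n_0}-(n_0-\tfrac12)|=E_N$ with, say, $y_{n_0}$ far to the \emph{right} of $\gamma_{n_0}^{(N)}$ (the left case being symmetric). Then $y_1,\dots,y_{n_0}$ are all crowded to the right of their ideal positions, so that among the first $n_0+E_N$ slots of the refined grid at least $\approx E_N$ are empty; inserting $x_{N+1}$ into such a slot raises the rank of $y_{n_0}$ by one and thereby \emph{decreases} its deviation. One would argue that $R_k$ is increasing across this block of empty slots, so that its maximiser $l$ indeed lies there, while a step that instead enlarges the extreme deviation can only occur when the new point is placed to the right of $y_{n_0}$, and one controls the net balance of these two kinds of steps. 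An equivalent, and perhaps more robust, way to organise the estimate is through the one-sided partial sums $T_m:=\sum_{n\le m}\bigl(Ny_n-(n-\tfrac12)\bigr)$ — the signed discrepancies at the points $y_m$ — bounding the contribution of each dyadic block of indices by $\mathcal{O}(1)$, in the spirit of the Haar/Fourier-coefficient proofs that the symmetrised van der Corput sequence has star discrepancy $\mathcal{O}(\log N)$.

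I expect the main obstacle to be exactly the control of that net balance, i.e.\ the claim that the $R_k$-maximiser reliably lands opposite a large deviation. Because the algorithm minimises an $\ell^2$-functional, it might in principle keep shrinking the sum of squares while a single deviation drifts slowly upward; the crude step-by-step estimate gives only $E_{N+1}\le E_N+\mathcal{O}(1)$, whose iteration is a worthless linear bound. Moreover the Remark after Theorem~\ref{structure} shows this danger is genuine — for $N=12$ the greedy step does \emph{not} fill the most overdue slot, and two earlier points end up sharing one slot — so any valid argument has to tolerate temporary, bounded worsening and recover it within $\mathcal{O}(1)$ further steps. Since no closed form for $\cS^*$ is available (unlike the extreme and periodic variants, which by Pausinger's theorem collapse onto the van der Corput sequence), such quantitative control of the greedy dynamics appears unavoidable. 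As a warm-up I would first attempt the weaker Conjecture~\ref{conj1}, which asks only for $\sum_n(Ny_n-(n-\tfrac12))^2=\mathcal{O}(\log N)$ and is therefore much more forgiving of the $\ell^2$-driven dynamics, and then use the resulting control of the error terms as a stepping stone towards the $\ell^\infty$ statement.
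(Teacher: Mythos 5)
You are addressing Conjecture~\ref{conj2}; the paper does not prove it, offering only numerical evidence up to $N=1100$ and the heuristic analogy with Pausinger's result in Remark~\ref{vdcrem}, so there is no proof of record to match -- and your proposal does not close the gap either. Your preparatory reformulations are correct and consistent with Section~\ref{sec3}: in dimension one $D_N^*(\cS^*)=\tfrac12+\max_{1\le n\le N}\bigl|Ny_n-(n-\tfrac12)\bigr|$, and by Theorem~\ref{mainl2} together with the computation $f_N(\gamma_k)=-(N+1)\gamma_k^2-2\sum_{n\geq k}y_n$ from the proof of Theorem~\ref{structure}, the greedy step does place $x_{N+1}$ at the empty slot maximizing your $R_k$, with $R_{k+1}-R_k=2\bigl(\tfrac{k}{N+1}-y_k\bigr)$. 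But the core of the argument -- that this $\ell^2$-driven selection rule keeps the $\ell^\infty$ deviation $E_N$ at $\mathcal{O}(\log N)$ -- is precisely what you leave open. The \emph{self-correction mechanism} is only asserted: you do not prove that the $R_k$-maximizer lands in the block of empty slots opposite the extremal deviation, nor do you quantify how often it can fail to do so; the remark following Theorem~\ref{structure} (the $N=12$ example) shows the greedy choice can be decided by a margin of order $10^{-3}$ between competing empty slots and is hard to predict in advance, so any such claim needs genuinely new quantitative control of the dynamics that neither you nor the paper currently has. The only rigorous per-step estimate you state, $E_{N+1}\leq E_N+\mathcal{O}(1)$, iterates to a linear bound, i.e.\ nothing beyond what Theorem~\ref{unif} already gives.

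Two smaller points. First, your reduction of Conjecture~\ref{conj1} is misnormalized: by~\eqref{minimal} it asks for $\sum_{n=1}^N\bigl(y_n-\tfrac{2n-1}{2N}\bigr)^2=\mathcal{O}(\log N)$, equivalently $\sum_{n=1}^N\bigl(Ny_n-(n-\tfrac12)\bigr)^2=\mathcal{O}(N^2\log N)$, not $\mathcal{O}(\log N)$. Second, even a proof of $D_N^*(\cS^*)=\mathcal{O}(\log N)$ would not settle the stronger half of the conjecture, namely the strict inequality of the limsup constant against $\tfrac{1}{3\log 2}$, which requires asymptotically sharp constants and hence much finer structural information about $\cS^*$ than is available (no closed form is known, in contrast to the extreme/periodic case resolved via Pausinger's theorem). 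In short: the set-up is sound and the plan is reasonable, but the statement remains a conjecture; what you have is a research programme with the decisive step missing, not a proof.
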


For the sake of completeness, in Figure~\ref{comparestarkron} we compare the star discrepancy $D_N^*(\{n\alpha\})$ of the first $N$ elements of the $(n\alpha)$-sequence to $D_N^*(\cS^*)$ for $N\leq 1100$. We choose $\alpha=\sqrt{2}$, since by~\cite{dupain} it is known that
$$ \inf_{\alpha}\limsup_{N\to\infty}\frac{D_N^*(\{n\alpha\})}{\log{N}}=\limsup_{N\to\infty}\frac{D_N^*(\{n\sqrt{2}\})}{\log{N}}=\frac{1}{4\log{(\sqrt{2}+1)}}=0.283648...  $$ 

\begin{figure}[ht] 
     \centering
     {\includegraphics[width=130mm]{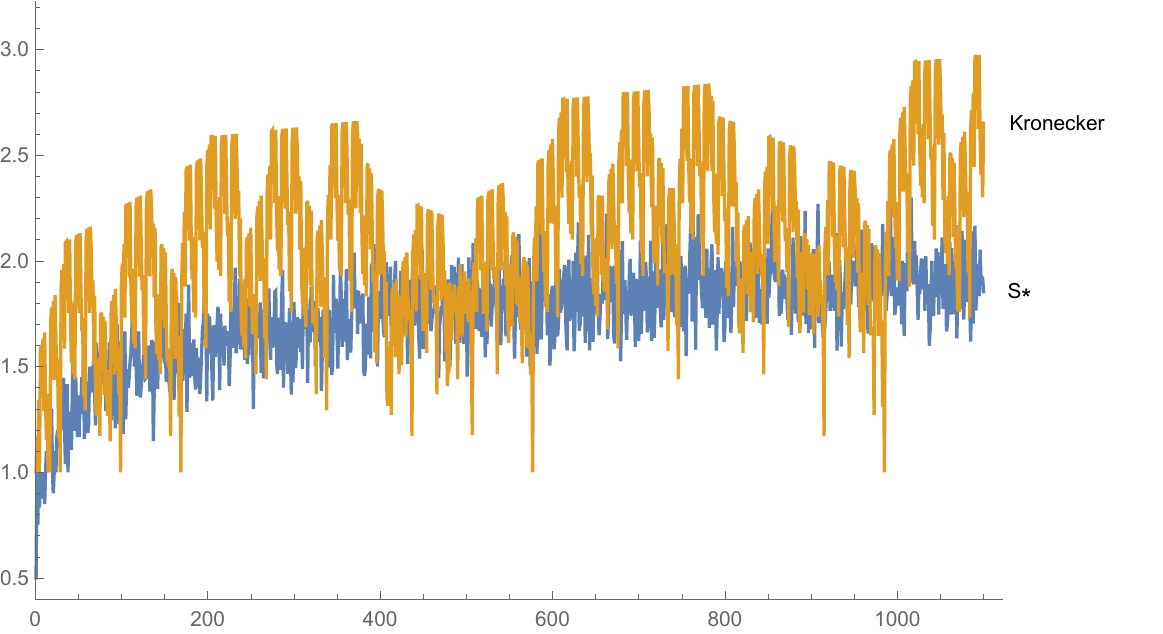}}
     \caption{The comparison between the star discrepancies of the two sequences suggests that $\cS^*$ even outperforms the Kronecker sequences with respect to star discrepancy asymptotically.}\label{comparestarkron}
\end{figure}

\subsection{Greedy minimization of the extreme $L_2$ discrepancy}

For $d=1$, the greedy algorithm as introduced in Theorem~\ref{maintheo} for $\bullet\in\{\mathrm{extr},\mathrm{per}\}$ has the following form. Here we make the particular choice $x_1=0$. Note that $L_{2,1}^{\mathrm{per}}(\{x_1\})^2=2 L_{2,1}^{\mathrm{extr}}(\{x_1\})^2=\frac16$ for every $x_1\in [0,1)$ and so the extreme and periodic $L_2$ discrepancy do not prefer any particular start values.

\begin{algorithm} \label{algo2}
   We construct a sequence $\cS'=(x_n)_{n\geq 1}$ in $[0,1)$ in the following way:
   \begin{enumerate}
	     \item Set $x_1=0$. 
			 \item For $N\geq 1$: Assume that the elements $x_1,\dots,x_{N}$ are already constructed. Set $ x_{N+1}:=\min\argmin_{x\in [0,1)}g_N(x)=\min\argmin_{x\in [0,1)}h_N(x),$
             where			 
			  $$ g_N(x):= -Nx(1-x)+2\sum_{n=1}^N (\min\{x_n,x\}-x_nx) $$ and 
			  $$ h_N(x):=\sum_{n=1}^N \left((x_n-x)^2-|x_n-x|\right). $$
	 \end{enumerate}
\end{algorithm}

Note that the functional $g_N$ stems from the greedy minimization of the extreme $L_2$ discrepancy, whereas $h_N$ comes from the periodic $L_2$ discrepancy. The equality $$\min\argmin_{x\in [0,1)}g_N(x)=\min\argmin_{x\in [0,1)}h_N(x)$$ is a direct consequence of the fact that $L_{2,N}^{\mathrm{per}}(\cS)^2=2 L_{2,N}^{\mathrm{extr}}(\cS)^2$ for every sequence in $[0,1)$. Alternatively, it is not difficult to show  directly that $g_N$ and $h_N$ share the same arguments of global minima by regarding the fact that $2\min\{x_n,x\}=x_n+x-|x_n-x|$. A beautiful result by Pausinger~\cite[Theorem 2.1]{Paus} immediately implies the following theorem.

\begin{theorem} \label{vdctheo}
Algorithm~\ref{algo2} generates the van-der-Corput sequence in base 2, i.e. $x_N=\varphi(N-1)$ for all $N\geq 1$.
\end{theorem}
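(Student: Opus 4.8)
The plan is to show by induction on $N$ that the greedy choice in Algorithm~\ref{algo2} reproduces the van der Corput sequence, by invoking Pausinger's result~\cite[Theorem 2.1]{Paus} on greedy energy sequences. First I would identify the objective function $h_N$ with (minus, up to affine terms) an energy functional of the type considered by Pausinger. Recall that $h_N(x)=\sum_{n=1}^N\left((x_n-x)^2-|x_n-x|\right)$; the key observation is that the kernel $K(x,y):=|x-y|-(x-y)^2$ is, up to a constant and sign, the kernel whose associated energy Pausinger minimizes greedily on $[0,1)$ — indeed $\tfrac12-|x-y|+(x-y)^2$ is exactly the one-dimensional periodic $L_2$ discrepancy kernel appearing in~\eqref{warn3}, and its diagonal is constant, so minimizing $h_N$ over $x$ is the same as the greedy step for the corresponding energy problem. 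I would make this identification precise, check that the hypotheses of~\cite[Theorem 2.1]{Paus} are met (the kernel is symmetric, continuous, conditionally of the right type, and the diagonal contributes only an $x$-independent term so it does not affect the $\argmin$), and note that the chosen start value $x_1=0$ coincides with (or is an admissible choice matching) the start of Pausinger's greedy sequence for this kernel.

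The next step is to recall or cite from~\cite{Paus} that the greedy energy sequence for precisely this kernel on $[0,1)$ is the van der Corput sequence in base $2$, i.e. $x_N=\varphi(N-1)$. This is the content of Pausinger's theorem, so at this stage the argument is essentially a matter of quoting it once the dictionary between his setup and Algorithm~\ref{algo2} has been set up. I would also remark, as the paper does, that because $L_{2,N}^{\mathrm{per}}(\cS)^2=2L_{2,N}^{\mathrm{extr}}(\cS)^2$ for one-dimensional sequences, the functionals $g_N$ and $h_N$ have the same $\argmin$ sets, so it is harmless to phrase everything in terms of $h_N$; alternatively one checks directly that $2\min\{x_n,x\}=x_n+x-|x_n-x|$ turns $g_N$ into $h_N$ plus an affine function of $x$, which I would include as a one-line verification.

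The main obstacle, and the only genuinely non-trivial point, is the bookkeeping around non-uniqueness of the $\argmin$: Algorithm~\ref{algo2} breaks ties by taking the smallest minimizer, and one must confirm that this tie-breaking rule is consistent with the tie-breaking (if any) in Pausinger's construction, or else show that at every step the minimizer is in fact unique so the issue does not arise. I would handle this by analyzing $h_N$ directly on each subinterval between consecutive already-placed points: on such an interval $h_N$ restricts to a quadratic with the same leading coefficient, so its minima are constrained in a way parallel to the argument in the proof of Theorem~\ref{mainl2}, and one can pin down exactly where the global minimum sits. If a genuine tie occurs, I would check it is resolved the same way as in the van der Corput ordering (smallest candidate first), which matches $\varphi(N-1)$. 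Once this compatibility is established, the induction closes and the theorem follows.
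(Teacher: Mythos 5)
Your proposal follows essentially the same route as the paper: rewrite $h_N(x)=\sum_{n=1}^{N} f(|x_n-x|)$ with $f(x)=x^2-x$, verify $f(x)=f(1-x)$, twice differentiability and $f''>0$ on $(0,1)$, invoke Pausinger's Theorem 2.1, and record the $g_N$/$h_N$ equivalence via $2\min\{x_n,x\}=x_n+x-|x_n-x|$; moreover, Pausinger's theorem itself describes the full minimizer set at each step (smallest choice gives $\varphi(N)$, other choices give generalized van der Corput sequences), which already settles your tie-breaking concern. One caveat on your fallback tie-break analysis: unlike in the proof of Theorem~\ref{mainl2}, the critical points of $h_N$ on the subintervals between placed points do depend on the already constructed points (as the paper notes in a remark), so that direct argument would not run as parallel as you suggest --- but it is not needed once the citation is in place.
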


\begin{proof} \label{mainex}
The sequence $\cS'$ is defined by $x_1=0$ and $$x_{N+1}=\min\argmin_{x\in[0,1)}h_N(x)=\min\argmin_{x\in[0,1)}\sum_{n=1}^{N}f(|x_n-x|)$$ with $f(x):=x^2-x.$ Since $f(x)=f(1-x)$ for all $x\in(0,1)$, $f$ is twice differentiable on $(0,1)$ and $f''(x)>0$ for all $x\in (0,1)$ we can apply \cite[Theorem 2.1]{Paus} and the result follows. Note that Pausinger's theorem tells us even more: if we do not always choose the smallest argument of a global minimum of $h_N$, Algorithm 2 still produces a generalized van der Corput sequence. We refer to \cite[Theorem 2.1]{Paus} for more details.
\end{proof}

We will give a direct proof of this result, based on Pausinger's ideas, in the Appendix.

\begin{remark}\rm \label{vdcrem}
\rm Theorem~\ref{maintheo} yields $L_{2,N}^{\mathrm{per}}(\cS')=\sqrt{2}L_{2,N}^{\mathrm{extr}}(\cS')\leq \sqrt{N/6}$ for all $N\in\NN$, whereas Theorem~\ref{vdctheo} implies the optimal upper bound $L_{2,N}^{\mathrm{per}}(\cS')=\sqrt{2}L_{2,N}^{\mathrm{extr}}(\cS')\lesssim \sqrt{\log{N}}$. We can understand this result as an indication that the upper bounds in Theorem~\ref{maintheo} are far from best possible and the $L_2$ discrepancy is much smaller than the given bounds. However, in order to prove the better bounds one needs to know the structure of the sequences resulting from the greedy algorithms, which is probably a very difficult task to investigate; especially in dimension $d\geq 2$. Further, due the fact that Algorithm 2 leads to a sequence with the optimal order of extreme and periodic $L_2$ discrepancy, we can conjecture that Algorithm~\ref{algo1} generates a sequence in $[0,1)$ with the optimal order of star $L_2$ discrepancy; i.e. $L_{2,N}(\cS^*)\lesssim \sqrt{\log{N}}$. From this point of view it is clear why Algorithm~\ref{algo1} does not generate the van der Corput sequence which fails to have the optimal order of $L_2$ discrepancy.
\end{remark}

\begin{remark} \rm
Algorithm~\ref{algo2} does not have the nice property of Algorithm~\ref{algo1} that the resulting elements of the sequence are all rational numbers independently of the set of points we start the algorithm with, because the derivative of $g_N\big|_{x\in (y_{l-1},y_l)}$ depends on the already constructed points for $l\in\{2,\dots,N+1\}$.
\end{remark}

\section{Tractability} \label{tract}

In this section we would like to mention connections of our results to the theory of information based complexity. In this field, one is for instance interested in the dependence of the discrepancy on the dimension $d$. In this section we would like to know the minimal number $N(d,\varepsilon)$ such that there exists an $N$-element point set $\cP$ in $[0,1)^d$ such that its normalized $L_2$ discrepancy $N^{-1}L_{2,N}(\cP)$ is smaller than a given $\varepsilon>0$. If $N(d,\varepsilon)$ depends only polynomially on $d$ and on $\varepsilon^{-1}$, one speaks of polynomial tractability. If there is no dependence on the dimension $d$ at all, one speaks of strong polynomial tractability. 
\\
On average, the squared normalized $L_2$ discrepancy of an $N$-element point set is 
\begin{equation} \label{average} \int_{[0,1]^{Nd}} \left(N^{-1}L_{2,N}(\{\bsx_1,\dots,\bsx_N\})\right)^2 \rd\bsx_1\cdots\rd\bsx_N=\left(\frac{1}{2^d}-\frac{1}{3^d}\right)N^{-1}=c_d^{\mathrm{star}}N^{-1} \end{equation}
(see e.g.~\cite{moro}), which proves the existence of an $N$-element point set in $[0,1)^d$ such that its normalized $L_2$ discrepancy is bounded by $\sqrt{c_d^{\mathrm{star}}}/\sqrt{N}$ . Note that Algorithm~\ref{mainalgo} generates an infinite sequence where the segment $\{\bsx_1,\dots,\bsx_N\}$ of its first $N$ elements satisfies this bound for every $N\geq 1$ (provided that we start the algorithm with a single element $\bsx_1\in [0,1]^d$ such that $L_{2,1}(\{\bsx_1\})\leq \sqrt{c_d^{\mathrm{star}}}$). Similar statements are true for the extreme and periodic $L_2$ discrepancy.\\
The dependence on $d$ in~\eqref{average} seems to be very good, as $c_d^{\mathrm{star}}$ decreases exponentially in $d$. However, this positive interpretation of the formula is deceptive. As pointed out by Novak and Wo\'zniakowski in~\cite{nw1}, the $L_2$ discrepancy is not properly normalized in the following sense: the normalized $L_2$ discrepancy of the empty set is given by
$$ \left(\int_{[0,1]^d} |0-|[\bszero,\bst)||^2\rd\bst\right)^{\frac12}=\left(\int_{[0,1]^d} t_1^2\cdot\dots\cdot t_d^2\rd\bst\right)^{\frac12}=3^{-d/2}. $$
Hence, it makes more sense to consider the quantity
$$ N_2(d,\varepsilon):=\min \{N\in\NN: \inf_{\cP\in [0,1]^d,\, \#\cP=N}L_{2,N}(\cP)<\varepsilon3^{-d/2}\}, $$
which is often referred to as the inverse of the $L_2$ discrepancy.
 Since $\sqrt{c_d^{\mathrm{star}}}/\sqrt{N}<\varepsilon3^{-d/2}$ is equivalent to $N>\left(1.5^d-1\right)\varepsilon^{-2}$, it is impossible to derive polynomial tractability from the discrepancy bound in~\eqref{average} with respect to the normalized error criterion $N_2(d,\varepsilon)$. It is known (see e.g.~\cite[Proposition 3.58]{DP}) that the normalized $L_2$ discrepancy is not tractable at all; in particular we have
 $$ N_2(d,\varepsilon)\geq \left(\frac98\right)^d (1-\varepsilon^{2}). $$
 To overcome the bad dependency on $d$, Novak and Wo\'zniakowski~\cite{nw1} introduced the concept of weighted $L_2$ discrepancy, where groups of variables are weighted differently. For every subset $\mathfrak{u}\subseteq \{1,\dots,d\}$ one chooses a positive weight $\gamma_{\mathfrak{u}}$. For the definition of the weighted $L_2$ discrepancy we refer to~\cite{DP,joe,nw1,NW10}. Here, we only remark that the choice $\gamma_{\{1,\dots,d\}}=1$ and $\gamma_{\mathfrak{u}}=0$ for all $\mathfrak{u}\neq \{1,\dots,d\}$ yields the standard star $L_2$ discrepancy. A popular choice of weights are product weights, where one selects a non-increasing sequence $\bsgamma=(\gamma_j)_{j\geq 1}$ of positive weights and puts
 $$ \gamma_{\mathfrak{u}}=\prod_{j\in \mathfrak{u}}\gamma_j. $$
 If one chooses product weights, the weighted $L_2$ discrepancy $L_{2,N}^{\bsgamma}(\cS)$ of the first $N$ elements of a sequence $\cS=(\bsx_n)_{n\geq 0}$ with $\bsx_n=(x_{n,1},\dots,x_{n,d})$ for all $n\in\NN$ is given explicitly by the Warnock-type formula (see~\cite[Theorem 2.1]{joe})
 \begin{align*}
     L_{2,N}^{\bsgamma}(\cS)=&N^2\prod_{i=1}^d \left(1+\frac{\gamma_i}{3}\right)-2N\sum_{n=1}^N\prod_{i=1}^d \left(1+\frac{\gamma_i}{2}(1-x_{n,i}^2)\right) \\
     &+\sum_{n,m=1}^N \prod_{i=1}^d \left(1+\gamma_i(1-\max\{x_{n,i},x_{m,i}\})\right).
 \end{align*} 
 We formulate a greedy algorithm in the style of Algorithm~\ref{mainalgo} which is based on the weighted $L_2$ discrepancy, and state a corresponding discrepancy result in the following theorem. The proof is similar to the proof of Theorem~\ref{maintheo}.
\begin{theorem}
  Let $\cS_d^{\bsgamma}=\{\bsx_1,\bsx_2,...\}\subset [0,1)^d$ be generated as follows:
\begin{enumerate} \label{mainalgo2}
    \item Choose $\bsx_1\in [0,1]^d$ such that $L_{2,1}^{\bsgamma}(\bsx_1)^2\leq c_d^{\bsgamma}$, where
    $$ c_d^{\bsgamma}:=\prod_{i=1}^d \left(1+\frac{\gamma_i}{2}\right)-\prod_{i=1}^d \left(1+\frac{\gamma_i}{3}\right). $$
    \item For $N\geq 1$ let $\{\bsx_1,\dots,\bsx_{N}\}$ already be given.      Choose
         \begin{equation} 
\bsx_{N+1}\in\argmin_{\bsy\in[0,1)^d}L_{2,N+1}^{\bsgamma}(\{\bsx_1,\dots,\bsx_{N},\bsy\})=\argmin_{\bsy\in[0,1)^d}f_{N,d}^{\bsgamma}(\bsy),
         \end{equation}
        where $f_{N,d}^{\bsgamma}: [0,1]^d\to \RR$ is defined as
          \begin{align*}
             f_{N,d}^{\bsgamma}(\bsy)=&-2(N+1)\prod_{i=1}^d \left(1+\frac{\gamma_i}{2}(1-y_i^2)\right)+2\sum_{n=1}^N \prod_{i=1}^d \left(1+\gamma_i(1-\max\{x_{n,i},y_i\})\right) \\
             &+\prod_{i=1}^d \left(1+\gamma_i (1-y_i)\right)
          \end{align*}
        for $\bsy=(y_1,\dots,y_d)$. 
    \end{enumerate}Then $L_{2,N}^{\bsgamma}(\cS_d^{\bsgamma})^2\leq c_d^{\bsgamma}N$ for all $N\geq 1$.
\end{theorem}
Note that it is possible to find a point $\bsx_1\in [0,1]^d$ such that $L_{2,1}(\bsx_1)^2\leq c_d^{\bsgamma}$, since the average of the squared weighted $L_2$ discrepancy over all $N$-element point sets in $[0,1]^d$ is exactly $c_d^{\bsgamma}N$ (see e.g.~\cite[Section 3.6]{DP}). Since
$$ c_d^{\bsgamma}<\prod_{i=1}^d \left(1+\frac{\gamma_i}{2}\right)=\mathrm{e}^{\sum_{i=1}^d \log{\left(1+\frac{\gamma_i}{2}\right)}}<\mathrm{e}^{\frac12\sum_{i=1}^d \gamma_i}; $$
hence we have $L_{2,N}^{\bsgamma}(\cS_d^{\bsgamma})^2\leq c_{\bsgamma} N$ with a positive constant $c_{\bsgamma}$ independent of $d$ if $$\sum_{i=1}^{\infty} \gamma_i <\infty.$$ We have $L_{2,N}^{\bsgamma}(\cS_d^{\bsgamma})^2\leq c_{\bsgamma}' d^a N $ with positive constants $c_{\bsgamma}'$ and $a$ independent of $d$ if
$$ \limsup_{d\to\infty}\frac{\sum_{i=1}^d \gamma_i}{\log{d}}<\infty. $$
Therefore, under suitable conditions on the weights we achieve (strong) polynomial tractability. Note that under the same conditions on the weights we obtain the same tractability notions for the normalized error criterion; i.e. the situation where we demand $L_{2,N}^{\bsgamma}(\cS_d^{\bsgamma})\leq \varepsilon \prod_{i=1}^d \left(1+\frac{\gamma_i}{3}\right)^{1/2}$, where $\prod_{i=1}^d\left(1+\frac{\gamma_i}{3}\right)^{1/2}$ is the normalized weighted $L_2$ discrepancy of the empty set (see~\cite[Section 7]{nw1} or~\cite[Section 9.3]{NW10}).

\section{Conclusion} \label{sec5}

We considered greedy algorithms where we choose $k\geq 1$ elements in $[0,1)^d$; i.e. an initial set of points $\{\bsx_1,\dots,\bsx_k\}\subset [0,1)^d$. The point $\bsx_{k+1}$ is then chosen such that a certain variant of $L_2$ discrepancy of the point set $\{\bsx_1,\dots,\bsx_k,\bsx_{k+1}\}$ is minimized. All subsequent elements of the sequence $\cS_d=\{\bsx_n\}_{n\geq 1}$ are selected in the same way. We proved that all sequences we can generate with this method are uniformly distributed modulo 1 and satisfy $L_{2,N}^{\bullet}(\cS_d)\leq C_d\sqrt{N}$ for a suitable notion of $L_2$ discrepancy, where the positive constant $C_d$ depends only on the dimension $d$ and on the initial set of points $\{\bsx_1,\dots,\bsx_k\}$. \\
We proved precise results on the resulting sequences in the one-dimensional case, where we put most attention on cases where we start the algorithms with a single element $x_1\in [0,1)$. A greedy minimization of the star $L_2$ discrepancy yields a natural extension of any initial segment $\cP_k:=\{x_1,\dots,x_k\}\subset [0,1)$ to a uniformly distributed sequence, where $x_N=\frac{2n-1}{2N}$ with some $n\in\{1,\dots,N\}$ for all $N\geq k+1$. We analysed the situation where $\cP_1=\{\frac12\}$ in more detail, where already for the first element the star $L_2$ discrepancy is minimized. We proved that two consecutive elements in the first $N$ elements of this sequence must always have a distance of at least $\frac{1}{2N}$. We also found numerically that the resulting sequence is likely to be low-discrepancy and might have significantly lower star discrepancy than the classical van der Corput sequence. 

If we consider an algorithm based on a greedy minimization of the one-dimensional extreme or periodic $L_2$ discrepancy, a result by Pausinger immediately yields that for the initial set $\cP_1=\{0\}$ we obtain the van der Corput sequence or a permuted variant thereof. Therefore, this greedy algorithm indeed generates a low-discrepancy sequence. However, the situation for general $\cP_k$ is less clear than in case of a minimization of the star $L_2$ discrepancy, since a greedy algorithm based on the extreme $L_2$ discrepancy does not produce all rational points in general.

Besides the rough discrepancy bound $L_{2,N}^{\bullet}(\cS_d)\leq C_d\sqrt{N}$ we were not yet able to prove better and more precise results  on higher-dimensional sequences generated by our greedy algorithms in the order of $N$. We assume that they might be low-discrepancy as well. However, if we consider a greedy algorithm based on the weighted $L_2$ discrepancy, under certain conditions on the weights the resulting sequence satisfies excellent $L_2$ discrepancy bounds with respect to the dimension $d$ for all $N\geq 1$ simultaneously.

Summarizing, there remain many interesting open problems related to the present work, where we would like to point out Conjectures~\ref{conj1} and~\ref{conj2} from Section 3 in particular.

\section*{Appendix - A direct proof of Theorem~\ref{vdctheo}}

Although the proof of Theorem~\ref{vdctheo} is complete by simply citing Pausinger's result, we would like to give a detailed and direct proof based on his arguments to give insight why exactly the van der Corput sequence is the output of Algorithm~\ref{algo2}. For a fixed $N\in\NN$ we introduce the function $G_N: [0,1)\to\RR$ such that
$$ G_N(x):=-Nx(1-x)+2\sum_{n=0}^{N-1} (\min\{\varphi(n),x\}-\varphi(n)x). $$
It is clear that we have to show that for all $N\geq 1$ we have
\begin{equation}
    \label{toshow}\min\argmin_{x\in [0,1)}G_N(x)=\varphi(N)
\end{equation}
in order to prove Theorem 2. We show~\eqref{toshow} in Corollary~\ref{power2}, Proposition~\ref{odds} and Corollary~\ref{generals}, which will conclude the proof. First, we prove two crucial properties of the function $G_N$:

\begin{lemma} \label{gprop}
Let be $N\geq 1$ and $r\in\NN_0$ maximal such that $2^r$ divides $N$; i.e. $N=2^r m$ for some odd integer $m\geq 1$.
\begin{enumerate}
\item Then $G_N$ is $2^{-r}$-periodic; i.e. for every $x\in [0,2^{-r})$ we have $G_N(x+2^{-r}l)=G_N(x)$ for all $l\in\{0,1,\dots,2^r-1\}$.
\item We have $G_m(2^rx)=2^rG_N(x)$ for all $x\in [0,2^{-r})$.
\end{enumerate}
\end{lemma}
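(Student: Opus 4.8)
The plan is to reduce both statements to a single self-similar decomposition of the initial segment of the van der Corput sequence, after which each becomes a (somewhat lengthy) bookkeeping computation. If $r=0$ both claims are trivial, so I would assume $r\ge 1$. Writing each $n\in\{0,\dots,N-1\}$ uniquely as $n=2^rq+s$ with $q\in\{0,\dots,m-1\}$ and $s\in\{0,\dots,2^r-1\}$, the lowest $r$ binary digits of $n$ are those of $s$ and the remaining digits are those of $q$, so that
$$\varphi(n)=\varphi(s)+2^{-r}\varphi(q)\qquad\text{and}\qquad\{\varphi(s):0\le s<2^r\}=\{k2^{-r}:0\le k<2^r\}.$$
Hence the multiset $\{\varphi(n):0\le n<N\}$ is the disjoint union over $k=0,\dots,2^r-1$ of the \emph{blocks} $k2^{-r}+2^{-r}\{\varphi(q):0\le q<m\}$, each a scaled copy of the first $m$ van der Corput points placed inside $[k2^{-r},(k+1)2^{-r})$. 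I would prove this identity first; it is the one genuinely structural ingredient, and everything else follows from it.

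For the second part I would fix $x\in[0,2^{-r})$ and split the defining sum of $G_N(x)$ block by block. Any $\varphi(n)$ in a block $k\ge1$ satisfies $\varphi(n)\ge2^{-r}>x$, so its summand $\min\{\varphi(n),x\}-\varphi(n)x$ reduces to $x-\varphi(n)x$, while a point $2^{-r}\varphi(q)$ in block $0$ contributes $2^{-r}\min\{\varphi(q),2^rx\}-2^{-r}\varphi(q)x$. Summing the $k\ge1$ contributions with $\sum_{k=1}^{2^r-1}k=2^{r-1}(2^r-1)$, the terms carrying $\sum_{q<m}\varphi(q)$ cancel exactly against those coming from block $0$, leaving
$$\sum_{n=0}^{N-1}\bigl(\min\{\varphi(n),x\}-\varphi(n)x\bigr)=\frac{(2^r-1)mx}{2}+2^{-r}\sum_{q=0}^{m-1}\bigl(\min\{\varphi(q),2^rx\}-\varphi(q)2^rx\bigr).$$
Multiplying by $2$ and adding $-Nx(1-x)=-2^rmx(1-x)$, one compares the result with $2^{-r}G_m(2^rx)=-mx(1-2^rx)+2^{1-r}\sum_{q<m}(\min\{\varphi(q),2^rx\}-\varphi(q)2^rx)$; the two differ by a quadratic polynomial in $x$ that is checked to be identically zero, which yields $G_N(x)=2^{-r}G_m(2^rx)$.

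For the first part I would argue through the derivative. Off the finite set $\{\varphi(0),\dots,\varphi(N-1)\}$ one has $G_N'(x)=-N+2Nx+2\#\{n<N:\varphi(n)>x\}-2\sum_{n<N}\varphi(n)$. The key combinatorial point is that for every $x\in[0,1-2^{-r})$ the window $[x,x+2^{-r})$ contains exactly $m$ of the points $\varphi(0),\dots,\varphi(N-1)$: if $x\in[k_02^{-r},(k_0+1)2^{-r})$, the window meets only blocks $k_0$ and $k_0+1$, and the block-$k_0$ points with $\varphi(n)\ge x$ together with the block-$(k_0+1)$ points with $\varphi(n)<x+2^{-r}$ are indexed by $\{q:\varphi(q)\ge 2^rx-k_0\}$ and $\{q:\varphi(q)<2^rx-k_0\}$, whose sizes add up to $m$. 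Since $2N\cdot2^{-r}=2m$, the two increments cancel and $G_N'(x+2^{-r})=G_N'(x)$ for a.e.\ $x$. As $G_N$ is continuous on $[0,1)$ (it is piecewise quadratic with finitely many pieces), the difference $G_N(x+2^{-r})-G_N(x)=C$ is a constant, and iterating gives $G_N(x+l2^{-r})=G_N(x)+lC$ for $x\in[0,2^{-r})$ and $0\le l\le 2^r-1$. Finally $C=0$: one has $G_N(0)=0$, hence $G_N(2^{-r})=C$, and $\lim_{x\to1^-}G_N(x)=0$ since each $\min\{\varphi(n),x\}-\varphi(n)x\to0$; letting $x\to(2^{-r})^-$ in the case $l=2^r-1$ then gives $0=G_N(2^{-r})+(2^r-1)C=2^rC$.

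The only substantial work, and the likeliest source of computational slips, will be the block-wise algebra in the second part: one must track several auxiliary sums (namely $\sum_{q<m}\varphi(q)$, $\sum_{k=1}^{2^r-1}k$, and the constant terms) and verify that both the $\sum_{q<m}\varphi(q)$-coefficient and the residual quadratic in $x$ vanish. I would guard against errors there by writing $2^rG_N(x)$ and $G_m(2^rx)$ as explicit degree-$2$ polynomials in $x$ once the $G_m$-part has been isolated, and comparing coefficients. A milder secondary point is the treatment of boundary ties $\varphi(q)=2^rx-k_0$ in the counting argument and the determination that the periodicity constant is zero, both of which are routine given the continuity of $G_N$.
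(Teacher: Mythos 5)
Your proposal is correct, and I checked the two computational claims you flag as delicate: the displayed block-sum identity holds (the terms carrying $\sum_{q<m}\varphi(q)$ do not cancel outright but combine into $-x\sum_{q<m}\varphi(q)$, which is exactly the piece absorbed into the rescaled $G_m$-sum, so your formula is right), and the residual quadratic in $x$ indeed vanishes identically, giving $G_N(x)=2^{-r}G_m(2^rx)$. Your structural starting point is the same as the paper's: the paper also begins by showing that the point multiset $\{\varphi(0),\dots,\varphi(N-1)\}$ is the union of $2^r$ translates by $w2^{-r}$ of its part in $[0,2^{-r})$, which is your block decomposition via $n=2^rq+s$ and $\varphi(n)=\varphi(s)+2^{-r}\varphi(q)$; and your proof of part~2 is essentially the paper's computation, merely organized block by block rather than through the sorted labels $y_n$ and the auxiliary identity for $\sum_{n=1}^{m}y_n$ (the paper's equation~\eqref{gug}). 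Where you genuinely diverge is part~1: the paper proves periodicity by explicitly expressing $\sum_{n}\min\{\varphi(n),x+2^{-r}l\}$ in terms of $\sum_{n}\min\{\varphi(n),x\}$ and verifying that all extra terms cancel inside $G_N$, while you show that $G_N'$ is $2^{-r}$-periodic off the finitely many points (since every length-$2^{-r}$ window contains exactly $m$ points, and $2N\cdot2^{-r}=2m$), conclude that $G_N(x+2^{-r})-G_N(x)$ is a constant $C$ because $G_N$ is continuous and piecewise quadratic, and then force $C=0$ from the boundary values $G_N(0)=0$ and $\lim_{x\to1^-}G_N(x)=0$. Your route trades the paper's bookkeeping (the shifted-sum identity handling all $l$ at once) for a short regularity-plus-boundary-value argument; both are sound, and your version arguably makes the mechanism of the periodicity (constant point count per window) more transparent, while the paper's stays entirely within elementary algebraic identities.
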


\begin{proof}
Consider the set $M_N:=\{\varphi(0),\varphi(1),\dots,\varphi(N-1)\}=\{y_1,y_2,\dots,y_N\}$, where $y_1<y_2<\dots<y_N$. Let $y_i \in [0,2^{-r})$; then $y_i=\varphi(2^{r}t)$ for some $t\in\{0,1,\dots,m-1\}$. Then we have
\begin{align*}
     \left\{\varphi(2^{r}t+s): s\in\{0,1,\dots,2^r-1\}\right\}=&\left\{\varphi(2^{r}t)+\varphi(s): s\in\{0,1,\dots,2^r-1\}\right\} \\
     =&\left\{\varphi(2^{r}t)+\frac{w}{2^r}: w\in\{0,1,\dots,2^r-1\}\right\}, 
\end{align*}
which implies that for every element $y_i\in M_N\cap [0,2^{-r})$ also $y_i+2^{-r}w\in M_N$ for all $w\in \{0,1,\dots,2^r-1\}$ and that every element of $M_N$ can be expressed that way. We use this fact to show that
\begin{align*}
    \sum_{n=0}^{N-1}\varphi(n)=&\sum_{w=0}^{2^r-1}\sum_{n=1}^{m}\left(y_n+2^{-r}w\right)=2^r\sum_{n=1}^{m}y_n+m\left(2^{r-1}-\frac12\right)
\end{align*}
and therefore
\begin{equation} \label{gug}
    \sum_{n=1}^{m}y_n=2^{-r}\left(\sum_{n=0}^{N-1}\varphi(n)-m\left(2^{r-1}-\frac12\right)\right).
\end{equation}
Now choose an arbitrary $x\in [0,2^{-r})$. Let $k$ be maximal such that $y_k \leq x$. Then we have
$$ \sum_{n=0}^{N-1} \min\{\varphi(n),x\}=\sum_{n=1}^k y_n+(N-k)x $$
and
\begin{align*}
   \sum_{n=0}^{N-1}& \min\{\varphi(n),x+2^{-r}l\} \\
     =&\sum_{w=0}^{l-1}\sum_{n=1}^{m} (y_n+2^{-r}w)+\sum_{n=1}^k (y_n+2^{-r}l)+(N-lm-k)(x+2^{-r}l) \\
     =&l\sum_{n=1}^{m} y_n+m2^{-r-1}l(l-1)+2^{-r}kl+(N-k)2^{-r}l-lm(x+2^{-r}l) \\
     &+\sum_{n=0}^{N-1} \min\{\varphi(n),x\} \\
     =&2^{-r}l\sum_{n=0}^{N-1}\varphi(N)-\frac{lm}{2}+N2^{-r}l-lmx-2^{-r-1}l^2m+\sum_{n=0}^{N-1} \min\{\varphi(n),x\}, 
\end{align*}
where we used~\eqref{gug} in the last line. From the last line follows immediately
\begin{align*}
    G_N(x+2^{-r}l)=&-N(x+2^{-r}l)(1-x-2^{-r}l)\\
     &+2\sum_{n=0}^{N-1} (\min\{\varphi(n),x+2^{-r}l\}-\varphi(n)(x+2^{-r}l))=G_N(x),
\end{align*}
as all terms that do not belong to $G_N(x)$ cancel out. \\
We prove the second item. With the arguments and notations from above we have
$$ \sum_{n=0}^{m-1}\min\{\varphi(n),2^rx\}=\sum_{n=1}^{k}(2^ry_n)+(m-k)2^r x=2^r \left(\sum_{n=0}^{N-1}\min\{\varphi(n),x\}+(m-N)x\right)$$
and
$$ \sum_{n=0}^{m-1}\varphi(n)=\sum_{n=1}^{m}(2^ry_n)=\sum_{n=0}^{N-1}\varphi(n)-m\left(2^{r-1}-\frac12\right) $$
by equation~\eqref{gug}. That yields
\begin{align*}
    G_m(2^rx)=& -m2^rx(1-2^rx)+2\sum_{n=0}^{m-1}\min\{\varphi(n),2^rx\}-2^{r+1}x\sum_{n=0}^{m-1}\varphi(n) \\
    =& 2^r \left(G_N(x)+Nx(1-x)-mx(1-2^rx)+2(m-N)x+2xm\left(2^{r-1}-\frac12\right)\right) \\
    =& 2^r G_N(x),
\end{align*}
and the proof is complete.
\end{proof}

We immediately conclude

\begin{corollary} \label{power2}
For all $r\geq 0$ we have $\argmin_{x\in [0,1)}G_{2^r}(x)=\Gamma_{2^r}.$
\end{corollary}

\begin{proof}
We use Lemma~\ref{gprop} to obtain $ G_{2^r}(x)=2^{-r}G_1(2^rx)=-x(1-2^rx)$ for all $x\in [0,2^{-r})$. Hence, $x^*=2^{-r-1}$ is the argument of the only global minimum of $G_N$ in $[0,2^{-r})$. The rest follows by the periodicity property of $G_N$ given in Lemma~\ref{gprop}.
\end{proof}

We note that $ \min\argmin_{x\in [0,1)}G_{2^r}(x)=2^{-r-1}=\varphi(2^r) $, which proves~\eqref{toshow} for powers of 2. Next, we show the result for odd integers $N\geq 1$. For $N=1$ it is obvious. In the next proof, we employ the reasoning of Pausinger.

\begin{proposition} \label{odds}
For an odd integer $N\geq 3$ we have~\eqref{toshow}.
\end{proposition}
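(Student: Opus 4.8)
The plan is to establish~\eqref{toshow} for every $N\geq 1$ by strong induction on $N$; for odd $N\geq 3$ we will in fact see that $\varphi(N)$ is the \emph{unique} minimizer of $G_N$ on $[0,1)$. Powers of $2$ are covered by Corollary~\ref{power2}, and for $N=2^{r}m$ with $m>1$ odd the $2^{-r}$-periodicity from Lemma~\ref{gprop} reduces the claim to the smaller odd index $m$. Hence it remains to treat an odd $N=2M+1$ with $M\geq 1$, and the only instance of the induction hypothesis we shall use is~\eqref{toshow} at index $M$.

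The starting point is the self-similar structure of the first $N$ van der Corput points. Splitting $n\in\{0,\dots,2M\}$ by parity and using $\varphi(2j)=\tfrac12\varphi(j)$ and $\varphi(2j+1)=\tfrac12+\tfrac12\varphi(j)$, one has $M_N\cap[0,\tfrac12)=\tfrac12 M_{M+1}$ and $M_N\cap[\tfrac12,1)=\tfrac12+\tfrac12 M_M$. Inserting this into the definition of $G_N$, evaluating $\sum_{n=0}^{N-1}\varphi(n)$ and $\sum_{n=0}^{N-1}\min\{\varphi(n),x\}$ piecewise (the latter separately on $[0,\tfrac12)$ and on $[\tfrac12,1)$, by partial-sum computations in the spirit of the proof of Lemma~\ref{gprop}), and completing the square via $\varphi(N-1)=\tfrac12\varphi(M)$ and $\varphi(N)=\tfrac12(1+\varphi(M))$, I would obtain the two functional equations
$$ G_N(x)=\tfrac12 G_{M+1}(2x)-\bigl(x-\varphi(N-1)\bigr)^2+\tfrac14\varphi(M)^2\qquad (x\in[0,\tfrac12)), $$
$$ G_N(x)=\tfrac12 G_{M}(2x-1)+\bigl(x-\varphi(N)\bigr)^2-\tfrac14\bigl(1-\varphi(M)\bigr)^2\qquad (x\in[\tfrac12,1)). $$
Replacing $x$ by $x+\tfrac12$ in the second and subtracting it from the first, using the elementary identity $G_{M+1}(v)-G_M(v)=-v(1-v)+2\min\{\varphi(M),v\}-2\varphi(M)v$, all correction terms collapse and one is left with
$$ G_N(x)-G_N\!\bigl(x+\tfrac12\bigr)=\tfrac14-|x-\varphi(N-1)|\qquad (x\in[0,\tfrac12)). $$

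Granting these, the conclusion follows in a few lines and without any case distinction. On $[\tfrac12,1)$ the term $(x-\varphi(N))^2$ is a nonnegative square vanishing exactly at $\varphi(N)$, and the induction hypothesis gives $G_M(2x-1)\geq G_M(\varphi(M))=G_M(2\varphi(N)-1)$; hence $G_N(x)\geq G_N(\varphi(N))$ on $[\tfrac12,1)$, with equality only at $x=\varphi(N)$. On $[0,\tfrac12)$ I evaluate the second functional equation at $x+\tfrac12$ (so that $x+\tfrac12-\varphi(N)=x-\varphi(N-1)$ and $2(x+\tfrac12)-1=2x$) and again invoke the induction hypothesis to get $G_N(x+\tfrac12)\geq G_N(\varphi(N))+(x-\varphi(N-1))^2$; inserting this into the identity above gives
$$ G_N(x)\geq G_N(\varphi(N))+\bigl(x-\varphi(N-1)\bigr)^2+\tfrac14-|x-\varphi(N-1)|=G_N(\varphi(N))+\Bigl(|x-\varphi(N-1)|-\tfrac12\Bigr)^2. $$
Since $x,\varphi(N-1)\in[0,\tfrac12)$ forces $|x-\varphi(N-1)|<\tfrac12$, the right-hand side strictly exceeds $G_N(\varphi(N))$ for every $x\in[0,\tfrac12)$. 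Thus $G_N$ attains its minimum on $[0,1)$ only at $\varphi(N)$, which is~\eqref{toshow}.

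The one genuinely laborious step is the derivation of the two functional equations: every sum in the definition of $G_N$ must be split over even and odd indices, the resulting partial sums of van der Corput points tracked, and the ranges $x<\tfrac12$ and $x\geq\tfrac12$ kept carefully apart. I expect the main obstacle to be organizing this computation so that the correction terms emerge in the precise squared forms $-(x-\varphi(N-1))^2$ and $+(x-\varphi(N))^2$ (up to additive constants); it is exactly this, combined with the algebraic collapse $t^2-t+\tfrac14=(t-\tfrac12)^2$, that makes the left-half estimate go through, and that is ultimately why Algorithm~\ref{algo2} reproduces the van der Corput sequence exactly rather than some perturbation of it.
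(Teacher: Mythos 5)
Your proof is correct, but it takes a genuinely different route from the paper. The paper treats odd $N$ directly and without induction: writing $N=\sum_{j}2^{m_j}$, it decomposes $G_N$ into block functions $\widetilde{G}_{2^{m_i}}$ attached to the consecutive dyadic blocks of the van der Corput points, shows each block is a translate of $G_{2^{m_i}}$ up to an additive constant, and then identifies $\argmin_{x\in[0,1)}G_N(x)$ as the intersection of the nested shifted grids $\Gamma_{2^{m_i}}+\varphi(N_{i+1})$, which collapses to the singleton $\{\varphi(N)\}$. You instead run a strong induction using only the parity split $M_N\cap[0,\tfrac12)=\tfrac12 M_{M+1}$ and $M_N\cap[\tfrac12,1)=\tfrac12+\tfrac12 M_M$ for $N=2M+1$; I verified your two functional equations and the resulting identity $G_N(x)-G_N(x+\tfrac12)=\tfrac14-|x-\varphi(N-1)|$ on $[0,\tfrac12)$, and they are exact as stated (the correction terms do come out as the squares $-(x-\varphi(N-1))^2$ and $+(x-\varphi(N))^2$), so the concluding comparison is airtight, needs only that $\varphi(M)$ \emph{is} a minimizer of $G_M$, and yields uniqueness of the minimizer for odd $N$, just as the paper's argument does. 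One remark on scope: since $M$ may be even, your strong induction has to invoke \eqref{toshow} at even indices too, so it implicitly re-proves Corollary~\ref{generals} along the way (via Lemma~\ref{gprop} and Corollary~\ref{power2}); this is legitimate, as those tools precede the proposition and all indices used are strictly smaller, but it means you establish all of \eqref{toshow} at once rather than the odd case in isolation. What your route buys is an exact digit-by-digit self-similar recursion for $G_N$ itself, of some independent interest; what the paper's route buys is a one-step description of the entire minimizer set straight from the binary expansion of $N$. The only unexecuted part of your write-up is the bookkeeping behind the two functional equations, which you correctly flag as laborious; it works out precisely as you predict.
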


\begin{proof}
 Write $N=\sum_{j=1}^k 2^{m_j}$, where $m_k>m_{k-1}>\dots>m_1=0$ are integers. Set $N_i:=\sum_{j=i}^k 2^{m_j}$ for $i=1,\dots,k$ and $N_{k+1}:=0$.
 
  Now we can write $G_N=\sum_{i=1}^{k}\widetilde{G}_{2^{m_i}}$,
  where we set 
  \begin{align*}
      \widetilde{G}_{2^{m_i}}(x):=&-2^{m_i}x(1-x)+2\sum_{n=N_{i+1}}^{N_i-1} (\min\{\varphi(n),x\}-\varphi(n)x) \\
      =&-2^{m_i}x(1-x)+2\sum_{n=0}^{2^{m_i}-1} (\min\{\varphi(n+N_{i+1}),x\}-\varphi(n+N_{i+1})x) \\
      =&G_{2^{m_i}}(x-\varphi(N_{i+1}))+\underbrace{2^{m_i}\varphi(N_{i+1})(1-\varphi(N_{i+1}))-2\varphi(N_{i+1})\sum_{n=0}^{2^{m_i}-1}\varphi(n)}_{\text{independent of $x$}},
  \end{align*} 
  where in the last step we regarded $\varphi(n+N_{i+1})=\varphi(n)+\varphi(N_{i+1})$ and applied some elementary algebra.
  From the last equality together with Corollary~\ref{power2} we conclude that 
  \begin{align*} \argmin_{x\in [0,1)} \widetilde{G}_{2^{m_i}}(x) =&\Gamma_{2^{m_i}}+\varphi(N_{i+1}) \\ =&\left\{\frac{l}{2^{m_i}}+\frac{1}{2^{m_i+1}}+\sum_{j=i+1}^{k}\frac{1}{2^{m_j+1}}: l=0,1,\dots,2^{m_i}-1\right\}. \end{align*}
  It is now obvious that we have
  $$ \argmin_{x\in [0,1)} \widetilde{G}_{2^{m_i}}(x)\subset \argmin_{x\in [0,1)} \widetilde{G}_{2^{m_{i+1}}}(x) $$
  for all $i=1,\dots,k-1$, which implies
  \begin{align*} \argmin_{x\in [0,1)}G_N(x)=&\bigcap_{i=1}^k \argmin_{x\in [0,1)} \widetilde{G}_{2^{m_i}}(x)=\argmin_{x\in [0,1)} \widetilde{G}_{2^{m_1}}(x)\\ =&\left\{\frac{1}{2^{m_1+1}}+\sum_{j=2}^{k}\frac{1}{2^{m_j+1}}\right\}=\{\varphi(N)\}. \end{align*}
  Hence, $\min \argmin_{x\in [0,1)}G_N(x)=\varphi(N)$ as claimed.
\end{proof}

\begin{corollary} \label{generals}
Let $N=2^r m$ with an integer $r\geq 1$ and an odd integer $m\geq 3$. Then for $N$ we have~\eqref{toshow}.
\end{corollary}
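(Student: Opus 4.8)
The plan is to deduce this last case directly from the two structural identities for $G_N$ in Lemma~\ref{gprop}, combined with the odd case already settled in Proposition~\ref{odds}. Writing $N=2^r m$, the idea is that the $2^{-r}$-periodicity of $G_N$ collapses the minimization over $[0,1)$ to one over the fundamental domain $[0,2^{-r})$, and the scaling relation $G_m(2^r x)=2^r G_N(x)$ then turns this into the already-solved problem for the odd integer $m$.

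Concretely, I would first use part (2) of Lemma~\ref{gprop}: since $2^r>0$ and $G_m(2^r x)=2^r G_N(x)$ on $[0,2^{-r})$, the order-preserving bijection $x\mapsto 2^r x$ from $[0,2^{-r})$ onto $[0,1)$ carries $\argmin_{x\in[0,2^{-r})}G_N(x)$ onto $\argmin_{x\in[0,1)}G_m(x)$. As $m\geq 3$ is odd, Proposition~\ref{odds} applies and gives $\argmin_{x\in[0,1)}G_m(x)=\{\varphi(m)\}$, hence $\argmin_{x\in[0,2^{-r})}G_N(x)=\{2^{-r}\varphi(m)\}$. Then, invoking part (1) of Lemma~\ref{gprop}, the $2^{-r}$-periodicity yields
$$ \argmin_{x\in[0,1)}G_N(x)=\left\{2^{-r}\varphi(m)+\frac{l}{2^r}: l=0,1,\dots,2^r-1\right\}. $$
Since $m$ is odd, $\varphi(m)\in[\frac12,1)$, so $2^{-r}\varphi(m)\in[2^{-r-1},2^{-r})$ and the smallest element of the set above is the one with $l=0$, i.e. $2^{-r}\varphi(m)$.

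It remains to identify this number: multiplying $m$ by $2^r$ shifts every binary digit of $m$ up by $r$ positions, hence shifts every binary digit of $\varphi(m)$ down by $r$ positions, so $\varphi(2^r m)=2^{-r}\varphi(m)$. Putting the pieces together gives $\min\argmin_{x\in[0,1)}G_N(x)=2^{-r}\varphi(m)=\varphi(N)$, which is~\eqref{toshow}. There is no real obstacle here: all the substance is in Lemma~\ref{gprop} and Proposition~\ref{odds}, and the remaining work is only bookkeeping with the affine change of variables and the smallest-argument convention. The one point worth stating carefully is that Proposition~\ref{odds} really produces a \emph{singleton} argmin for odd $m\geq 3$, which is what makes the full argmin set reduce to the explicit arithmetic progression above and its minimum unambiguous.
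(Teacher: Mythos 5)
Your argument is correct and follows essentially the same route as the paper: use the scaling identity $G_m(2^rx)=2^rG_N(x)$ together with Proposition~\ref{odds} to locate the minimum in the fundamental domain $[0,2^{-r})$, then the $2^{-r}$-periodicity and the identity $\varphi(2^rm)=2^{-r}\varphi(m)$ to conclude $\min\argmin_{x\in[0,1)}G_N(x)=\varphi(N)$. You merely spell out the periodicity and digit-shift steps that the paper leaves implicit, and your remark about the singleton argmin for odd $m$ matches what the paper's proof of Proposition~\ref{odds} actually establishes.
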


\begin{proof}
 Since $G_N(x)=2^{-r}G_m(2^r x)$ for $x\in [0,2^{-r})$ by Lemma~\ref{gprop}, we derive from Proposition~\ref{odds} that $G_N(x)$ takes a unique global minimum in $[0,2^{-r})$ at $x^*=\varphi(m)/2^r=\varphi(N)$, which proves the corollary.
\end{proof}

\paragraph{Acknowledgements} I would like to thank Stefan Steinerberger for his friendly feedback and his  suggestions on how to highlight the most interesting aspects of the results more clearly. I also thank an anonymous referee for very good suggestions on how to rearrange the paper in order to improve its readability and how to avoid unnecessary abbreviations and technicalities in the introduction.

\noindent{\bf Author's Address:}\\
Ralph Kritzinger, Leopold-Werndl-Stra\ss e 25a, A-4400 Steyr, Austria.\\
Email: 
ralph.kritzinger@yahoo.de.


\begin{thebibliography}{}

\bibitem{beck} J. Beck: A two-dimensional van Aardenne-Ehrenfest theorem in irregularities of distribution, Compos. Math. 72, no. 3, 269--339, 1989.

\bibitem{befa} R. B\'{e}jian and H. Faure: Discr\'{e}pance de la suite de van der Corput. C. R. Acad. Sci., Paris, S\'{e}r. A 285: 313--316, 1977.

\bibitem{bilyk} D. Bilyk, M. Lacey and A. Vagharshakyan: On the small ball inequality in all dimensions, J. Funct. Anal. 254, no. 9, 2470--2502, 2008.

\bibitem{chafa} H. Chaix and H. Faure: Discr\'{e}pance et diaphonie en dimension un. Acta Arith. 63: 103--141, 1993.

\bibitem{DP} J. Dick and F. Pillichshammer. \textit{Digital Nets and Sequences: Discrepancy Theory and Quasi-Monte Carlo Integration}, Cambridge University Press Cambridge, 2010.

\bibitem{DP14neu} J. Dick and F. Pillichshammer: Optimal $\mathcal{L}_2$ discrepancy bounds for higher order digital sequences over the finite field $\mathbb{F}_2$. Acta Arith. 162: 65--99, 2014.

\bibitem{DT97} M. Drmota and R.F. Tichy: \textit{Sequences, discrepancies and applications}. Lecture Notes in Mathematics 1651, Springer Verlag, Berlin, 1997.

\bibitem{dupain} Y. Dupain and V. S\'{o}s, On the discrepancy of $(n\alpha)$ sequences. Topics in classical
number theory, Colloq. Budapest 1981, Vol. I, Colloq. Math. Soc. Janos Bolyai, 34: 355--387, 1984.


\bibitem{fau90} H. Faure: Discr\'{e}pance quadratique de la suite de van der Corput et de sa sym\'{e}trique. Acta Arith. 60:
333--350, 1990.


\bibitem{HKP20} A. Hinrichs, R. Kritzinger and F. Pillichshammer: Extreme and periodic $L_2$ discrepancy of plane point sets.  Acta Arith. 199.2: 163--198, 2021. 

\bibitem{HOe} A. Hinrichs and J. Oettershagen: Optimal point sets for quasi-Monte Carlo integration of bivariate periodic functions with bounded mixed derivatives. Monte Carlo and quasi-Monte Carlo methods, pp. 385--405, Springer Proc. Math. Stat., 163, Springer, 2016. 

\bibitem{kirk} N.  Kirk: On Proinov's lower bound for the diaphony. Unif. Distrib. Theory 15(2): 39--72, 2020.

\bibitem{KP21} R. Kritzinger and F. Pillichshammer: Exact order of extreme $L_p$ discrepancy of infinite sequences in arbitrary dimension.  Submitted, 2021. 

\bibitem{joe} S. Joe: Formulas for the Computation of the Weighted $L_2$ Discrepancy. Department of Mathematics Research Report 55, 1997, The University of Waikato, Hamilton, New Zealand.

\bibitem{lp} G. Larcher and F. Pillichshammer: Walsh series
analysis of the $L_2$-discrepancy of symmetrisized point sets.
Monatsh. Math. 132: 1--18, 2001.

\bibitem{levin} M.B. Levin: On the upper bound of the $L_p$-discrepancy of Halton's sequence and the Central Limit Theorem for Hammersley's
net, arXiv: 1806.11498.

\bibitem{moro} W. J. Morokoff and R. E. Caflisch: Quasi-random sequences and their discrepancies. SIAM J. Sci.Comput. 15: 1251--1279, 1994.

\bibitem{nied} H. Niederreiter: Discrepancy and convex programming, Osgood, C.F.,
  in: Diophantine Approximation and Its Applications,
Academic Press,
New York, 129--199, 1973.

\bibitem{nw1} E. Novak and H. Wo\'zniakowski: $L_2$ discrepancy and multivariate integration, in: Analytic Number Theory: Essays in Honour of Klaus Roth, W. Chen, W. T. Gowers, H. Halberstam, W. M. Schmidt, R. C. Vaughan, eds., Cambridge University Press, pp. 359--388, 2009.

\bibitem{NW10} E. Novak and H. Wo\'zniakowski: \textit{Tractability of Multivariate Problems, Volume II: Standard Information for Functionals.} European Mathematical Society, Z\"urich, 2010.

\bibitem{Paus}
F. Pausinger: Greedy energy minimization can count in binary: point
charges and the van der Corput sequence. Annali di Matematica Pura ed Applicata 200: 165--186, 2021.


\bibitem{pro86} P.D. Proinov: On irregularities of distribution.  C. R. Acad. Bulgare Sci. 39: 31--34, 1986.

\bibitem{pro1988a} P.D. Proinov: Symmetrization of the van der Corput generalized sequences. Proc. Japan Acad. Ser. A Math. Sci. 64: 159--162, 1988.

\bibitem{proat} P.D. Proinov and E.Y. Atanassov: On the distribution of the van der Corput generalized sequences. C. R. Acad. Sci. Paris S\'{e}r. I Math. 307: 895--900, 1988.

\bibitem{Schm72distrib} W.M. Schmidt, Irregularities of distribution VII. Acta Arith. 21:  45--50, 1972.

\bibitem{Stein1}
S. Steinerberger: A nonlocal functional promoting low-discrepancy point sets. J. Complexity 54, 2019.

\bibitem{Stein2}
S. Steinerberger: Dynamically defined sequences with small discrepancy. Monatshefte Math. 191, 639--655, 2020.

\bibitem{vdc1}
Van der Corput, J.G.: Verteilungsfunktionen. I. Proc. Kon. Ned. Akad. v. Wetensch. 38, 813--821,1935.

\bibitem{vdc2}
Van der Corput, J.G.: Verteilungsfunktionen. II. Proc. Kon. Ned. Akad. v. Wetensch. 38, 1058--1066, 1935.

\bibitem{warn} T. T. Warnock: Computational investigations of low discrepancy point sets. Applications of Number Theory to Numerical Analysis. pp.  319--343, Academic Press, New York, 1972.

\bibitem{Weyl} H. Weyl, \"Uber die Gleichverteilung von Zahlen mod. Eins. (German.) Math.
Ann. 77: 313--352, 1916.

\bibitem{zint} P. Zinterhof: \"Uber einige Absch\"atzungen bei der Approximation von Funktionen mit Gleichverteilungsmethoden (German). \"Osterr. Akad. Wiss. Math.-Naturwiss. Kl. S.-B. II 185: 121--132, 1976. 



\end{thebibliography}
\end{document}